\documentclass{amsart}
\usepackage{amssymb,amsmath,amsthm}
\usepackage{graphicx}
\usepackage{multicol}
\usepackage{array}
\usepackage{tikz}
\usepackage{tkz-graph}
\usepackage{pgf}

\usetikzlibrary{arrows,shapes}

\newcommand{\iso}{\cong}
\newcommand{\Z}{\mathbb{Z}}

\newcommand{\C}{\mathbb{C}}

\newcommand{\N}{\mathbb{N}}

\newcommand{\nor}{\trianglelefteq}
\newcommand{\abs}[1]{\left\vert #1 \right\vert}

\newcommand{\op}[1]{#1^{\text{op}}}
\newcommand{\Endo}[1]{\text{End}\left(#1\right)}
\DeclareMathOperator{\Adutomorphism}{Aut}
\newcommand{\Aut}[1]{\Adutomorphism\left(#1\right)}

\DeclareMathOperator{\Adjoint}{Adj}
\newcommand{\Adj}[1]{\Adjoint\left( #1 \right)} 
\DeclareMathOperator{\Isom}{Isom}
\newcommand{\PIsom}[1]{\Psi\Isom\left( #1 \right)} 

\newtheorem{thm}[equation]{Theorem}
\newtheorem{cor}[equation]{Corollary}
\newtheorem{lem}[equation]{Lemma}

\newtheorem{proposition}[equation]{Proposition}
\newtheorem{definition}[equation]{Defintion}

\theoremstyle{definition}

\theoremstyle{remark}
\newtheorem{remark}[equation]{Remark}

\numberwithin{equation}{section}

\begin{document}
\title{Longer Nilpotent Series for Classical Unipotent Subgroups}
\date{March 10, 2015}
\author{Joshua Maglione}
\address{Department Of Mathematics, Colorado State University, Fort Collins, CO 80523}
\email{maglione@math.colostate.edu}

\begin{abstract}
In studying nilpotent groups, the lower central series and other variations can be used to construct an associated $\Z^+$-graded Lie ring, which is a powerful method to inspect a group. Indeed, the process can be generalized substantially by introducing $\N^d$-graded Lie rings.
We compute  the adjoint refinements of the lower central series of the unipotent subgroups of the classical Chevalley groups over the field $\Z/p\Z$ of rank $d$. We prove that, for all the classical types, this characteristic filter is a series of length $\Theta(d^2)$ with nearly all factors having $p$-bounded order.
\end{abstract}

\maketitle

\section{Introduction}
The connection between $p$-groups and Lie rings has long been known and continues to be a symbiotic relationship. Indeed, in \cite{Lazard54}, Lazard proves that, for a series of a group $G$,
\[ G = G_1 \geq G_2 \geq \cdots \geq G_n\geq G_{n+1} = 1,\]
if $[G_i,G_j]\leq G_{i+j}$ for all $i,j\geq 1$, then there is an associated graded Lie ring to the series given by 
\[ L = \bigoplus_{i=1}^n G_i/G_{i+1}.\]

J. B. Wilson weakened the hypothesis of Lazard's statement by replacing series with filters, and proved filters still have an associated graded Lie ring \cite{Wilson13b}. A filter $\phi$ is a function from a pre-ordered commutative monoid (see Definition \ref{pre-order definition}) $(M,\prec)$ into the set of normal subgroups of $G$ satisfying 
\[ (\forall m,n\in M) \qquad\qquad  [\phi_m,\phi_n] \leq \phi_{m+n} \quad \text{and} \quad m\prec n \text{ implies } \phi_m\geq \phi_n.\]
Filters produce lattices of normal subgroups, and in the case where $(M,\prec)$ is totally ordered, the filter is a series. 

A notable feature of filters is their ease of refinement. Given a filter from $M$ into the normal subgroups of $G$, one can insert new subgroups into the lattice and generate a new filter. Wilson gives a few locations to search for new subgroups to add to filters, one of which is the adjoint refinement, which uses ring theoretic properties stemming from the graded Lie ring product.

The length of the adjoint filter, the filter stabilized by the adjoint refinement, seems difficult to predict without considering specific examples. Therefore, we work with the well known family of unipotent subgroups of classical groups, e.g. the group of upper unitriangular matrices. Surprisingly, we find something new. We compute the adjoint refinements of the lower central series of the unipotent subgroups of these groups over the field $\Z/p\Z$. After the refinement process stabilizes, we find that the factors of this new filter are very small. Moreover, the subgroups of this filter are totally ordered, so we obtain a characteristic series. The length of the adjoint series is largely unchanged under most quotients, so this seemingly narrow case of examples is a great place to start understanding these filters on a wide range of families of $p$-groups.

\begin{thm}\label{main result for A}
If $U=\langle x_r(t): r\in\Phi^+, t\in\Z/p\Z\rangle$ is a subgroup of the Chevalley group $A_d(\Z/p\Z)$ for $p \geq 3$ (i.e. the group of upper unitriangular matrices), then all of the (nontrivial) factors of the series stabilized by the adjoint refinement process have order $p$ or $p^2$.
\end{thm}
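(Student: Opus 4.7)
The plan is to begin from the standard Chevalley presentation. For type $A_d$, the positive roots are $r_{i,j} = e_i - e_j$ with $1 \leq i < j \leq d+1$, so $U$ is the group of $(d+1)\times(d+1)$ upper unitriangular matrices over $\F_p$. The $k$-th term of the lower central series is $U_k = \langle x_{r_{i,j}}(t) : j-i \geq k\rangle$, and each factor $U_k/U_{k+1}$ is elementary abelian of rank $d+1-k$ with $\F_p$-basis the cosets of $x_{r_{i,i+k}}(1)$. The associated graded Lie ring is therefore the nilpotent Lie algebra $L = \bigoplus_k L_k$ of strictly upper triangular matrices over $\F_p$, graded by diagonal distance, with basis the matrix units $e_{i,j}$ for $i < j$ and bracket $[e_{i,j},e_{j',\ell}] = \delta_{j,j'}e_{i,\ell} - \delta_{i,\ell}e_{j',j}$.

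The heart of the argument is to compute the adjoint ring $\Adj(L)$ and its action on each $L_k$. The bracket formula above shows that iterated adjoint actions from $L_1$ on $L_k$ separate the matrix units $e_{i,i+k}$ according to their row index $i$ and column index $i+k$, so that $\Adj(L)$ inherits a structure in which operators attached to rows and columns act essentially independently. I would argue that the $\Adj$-characteristic submodules of each $L_k$ are consequently spanned by very small sets of matrix units, mostly singletons and at worst pairs.

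The plan then is to translate this decomposition back to a refinement of the filter on $U$ via Wilson's machinery and iterate. By induction on the refinement pass, I would show that every characteristic summand of $L_k$ of $\F_p$-dimension $\geq 3$ admits a further proper $\Adj$-characteristic subspace once the action of the preceding refinement is taken into account, so that stabilization forces every factor to have dimension at most $2$. The hypothesis $p \geq 3$ enters through the Chevalley structure constants $N_{r,s} = \pm 1$ remaining invertible modulo $p$, which prevents accidental fusions among basis elements and which fails in the parallel analysis for the other classical types at $p = 2$.

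The main obstacle will be pinpointing exactly where two-dimensional factors appear and ruling out any larger indecomposable survivors. I expect these to correspond to pairs of matrix units $e_{i,i+k}$, $e_{i',i'+k}$ whose row/column indices are interchanged by a residual symmetry of $\Adj(L)$ at the stable stage, arising typically at the extreme positions of a superdiagonal where one end of the index range forces a coincidence. Classifying these pairs combinatorially and verifying that no triple of matrix units is ever fused is the computational core; once this is done the theorem follows immediately, since every factor in the stabilized series is then either $1$- or $2$-dimensional over $\F_p$.
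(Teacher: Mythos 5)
Your proposal rests on a misreading of what the adjoint refinement actually is, and this is structural rather than presentational. The object driving the refinement is the adjoint ring $\Adj{\circ}$ of the graded product $\circ : L_s^{(k)} \times L_t^{(k)} \to L_{s+t}^{(k)}$, namely the ring of pairs of endomorphisms $(f,g)$ with $uf \circ v = u \circ gv$; the new subgroups are the preimages of $L_s^{(k)}J^i$ where $J$ is its Jacobson radical. Your sketch instead works with ``iterated adjoint actions from $L_1$ on $L_k$,'' i.e.\ the adjoint representation $\mathrm{ad}$ of the graded Lie algebra, and posits $\Adj$-characteristic submodules of every homogeneous component $L_k$ that split each factor down to dimension at most $2$. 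That is not the mechanism. In the actual computation the only place a nontrivial radical appears is the top component: for type $A$ the Gram matrix of $\circ : L_1 \times L_1 \to L_2$ is the tridiagonal matrix $M_A$, its adjoint ring has a two-dimensional Jacobson radical $J_A$ with $J_A^2 = 0$, and the single new subgroup per pass is $H_1 = \langle X_{p_1}, X_{p_d}, \gamma_2\rangle$ (then $H_k = \langle X_{p_k}, X_{p_{d-k+1}}, H_{k-1}\rangle$, peeling off the two extreme fundamental root subgroups at each iteration). All of the splitting of the lower components $L_k$ for $k \ge 2$ comes not from any adjoint computation on $L_k$ itself but from regenerating the filter: $\alpha_n = \prod_{\bf t}[\pi_{\bf t}]$ over paths ${\bf t}$ in the Cayley graph of $\N^m$, with commutator entries drawn from $\{U, H_i, \gamma_2\}$.

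Consequently the part you defer as ``the computational core'' is essentially the entire proof. The decisive step is combinatorial: for a root $r = p_i + \cdots + p_j$ one identifies the minimal multiset $\mathcal{B}$ of commutator entries needed so that $X_r \le [\pi_{\bf t}]$, and shows that $\mathcal{B}$ determines $r$ up to the diagram involution $r \mapsto r' = p_{d-j+1} + \cdots + p_{d-i+1}$. Hence the smallest term of the stable series containing $X_r$ also contains $X_{r'}$ and no other root subgroup, giving factors of order $p$ (when $r = r'$) or $p^2$. Your guess that two-dimensional factors come from a residual symmetry is correct in spirit --- it is exactly the order-two symmetry of the $A_d$ diagram --- but it is not confined to ``extreme positions of a superdiagonal'': every root not fixed by the flip is paired with its image, so order $p^2$ is the generic case, not an exceptional one. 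Finally, the role of $p \ge 3$ is not the invertibility of the structure constants $N_{r,s} = \pm 1$ (these are units for every $p$); it is that for $p = 2$ the relevant bilinear maps become symmetric rather than alternating (and for types $B$ and $C$ the identification $\gamma_2(U) = U_2$ fails), which changes the adjoint ring and its radical.
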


For comparison, the usual lower central series (which is equal to the lower central exponent $p$ series) has large factors, many of order approximately $p^{d}$.
When investigating the action of $\Aut{U}$ on $U$, Theorem \ref{main result for A} puts a large constraint on the possible actions of the automorphism group. Indeed, in the computations for isomorphism or automorphism testing, a reduction in the order of the first factor alone can greatly reduce the algorithm run time \cite{LEGO2002}. For example, the automorphism group of the first factor of the lower central series of $U$ is $\text{GL}(d,p)$. However, for the series of Theorem \ref{main result for A}, the automorphism group of the first factor is $\text{GL}(e,p)$, where $e$ is either $1$ or $2$. For unipotent subgroups of other classical Chevalley groups, we get a similar outcome. 

\begin{thm}\label{main result}
Let $p$ be a prime with $p\geq 3$. Let $U=\langle x_r(t): r\in\Phi^+, t\in\Z/p\Z\rangle$ be a unipotent subgroup of a classical Chevalley group over $\Z/p\Z$ with Lie rank $d$. There exists a characteristic series of $U$ whose length is $\Theta(d^2)$ and whose factors have constant order (except possibly a constant number of factors). Furthermore, the associated Lie algebra, $L(\alpha)$ is $\N^m$-graded, where $m$ is either $\left\lceil d/2 \right\rceil$ or $\left\lfloor d/2 \right\rfloor$. 
\end{thm}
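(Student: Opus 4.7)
The plan is to handle the four families of classical Chevalley groups (types $A_d$, $B_d$, $C_d$, $D_d$) essentially case by case, since the combinatorics of the adjoint refinement is controlled by the root system $\Phi^+$ of each type. Theorem \ref{main result for A} already disposes of type $A$, producing factors of order $p$ or $p^2$. For the remaining types I would exploit the fact that $U$ is generated by the root subgroups $\langle x_r(t):t\in\Z/p\Z\rangle$, each of order $p$, and that the Chevalley commutator formula expresses $[x_r(t),x_s(u)]$ as a product of $x_{ir+js}(c_{ij}t^iu^j)$ with structure constants $c_{ij}\in\Z/p\Z$. Because $p\geq 3$, the structure constants are units whenever they are nonzero, so the bracket structure on the graded Lie ring $L=\bigoplus U_i/U_{i+1}$ faithfully reflects the root-addition combinatorics.

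First I would describe the lower central series explicitly: $U_k$ is generated by the root subgroups indexed by roots of height at least $k$, and its factors have order $p^{a_k}$ where $a_k$ is the number of positive roots of height $k$. Then I would set up the adjoint refinement in the sense of Wilson, which inserts subgroups determined by the adjoint ideals of the graded Lie ring. The main combinatorial task is to identify, for each type, a bi-grading (or $\N^m$-grading) on the Lie ring induced by a natural decomposition of the positive roots. For types $B_d$, $C_d$, $D_d$ I would use the standard indexing of positive roots as $e_i\pm e_j$ (and possibly $e_i$ or $2e_i$): pairs $(i,j)$ with $i<j$ give a natural grading by $\lceil d/2\rceil$ or $\lfloor d/2\rfloor$ coordinates, corresponding to the folding of the root diagram. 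This yields the claimed $\N^m$-grading on $L(\alpha)$.

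Next I would verify that, after stabilization, the refined filter has length $\Theta(d^2)$ and that almost every factor has constant (i.e. $d$-independent) order. The upper bound $O(d^2)$ is immediate from $|\Phi^+|=\Theta(d^2)$, since any characteristic series of $U$ has length at most $\log_p |U|=|\Phi^+|$. The matching lower bound requires showing that the adjoint refinement actually splits enough of the large factors of the lower central series into pieces of bounded order. This is the place where I would run the refinement argument of Wilson using the graded product, iterating until stability; the boundedness comes from checking that each "slice" of the bi-grading contains only a bounded number of root subgroups (typically one or two), so the associated factor has order at most $p^2$, with a constant number of exceptional factors arising from short or long roots.

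The hardest step, I expect, is the explicit verification of stability of the adjoint refinement for types $B$, $C$, and $D$: one must check that the refinement found by intersecting with adjoint ideals actually coincides with the proposed grading, and that no further refinements arise. This is a direct but tedious computation with the Chevalley structure constants, and the exceptional factors (of non-constant order) correspond to the finitely many roots which behave differently, e.g. the short roots in $B_d$ and $C_d$ or the exceptional components in $D_d$. Once stability and the grading are in hand, the length $\Theta(d^2)$ and the $\N^m$-grading with $m\in\{\lceil d/2\rceil,\lfloor d/2\rfloor\}$ both follow by counting the graded components.
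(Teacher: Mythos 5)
Your overall frame (lower central series via root heights, Wilson's adjoint refinement, upper bound from $\log_p|U|=|\Phi^+|=\Theta(d^2)$) matches the paper, but the proposal defers exactly the content that constitutes the proof. You name the ``hardest step'' --- verifying that the adjoint refinement stabilizes at your proposed grading for types $B$, $C$, $D$ --- and leave it as ``a direct but tedious computation.'' That step is not optional bookkeeping: the theorem is about the series produced by the adjoint refinement, so you cannot posit a grading by ``pairs $(i,j)$'' or ``folding of the root diagram'' and then check it agrees; you must compute $\Adj{\circ}$ and its Jacobson radical and see what subgroups actually get inserted. In the paper this is done once: Lemma \ref{initial results} shows the Gram matrix $M_{\mathfrak{g}}$ of $\circ:L_1\times L_1\to L_2$ has the shape of the Cartan matrix, so $M_B=M_C=M_A$ (tridiagonal) and only $M_D$ differs; the Jacobson radical (\ref{Jacobson types ABC})--(\ref{Jacobson type D}) is supported on the corner entries $E_{12}$, $E_{(d-1)d}$ (plus $E_{(d-2)d}$ in type $D$), so each iteration strips off the extremal fundamental root subgroups and reduces to the same problem in rank $d-2$. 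That recursion is what forces $m=\lceil d/2\rceil$ or $\lfloor d/2\rfloor$ (Proposition \ref{grading}); your proposal gives no mechanism for why $m$ has this value rather than, say, $d$ or a constant.

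You also miss the reduction that makes the paper's proof of Theorem \ref{main result} three sentences long: the type $A$ analysis applies verbatim to every root $r=p_i+\cdots+p_{i+h-1}$ with multiplicity-free summands, and every classical root system of rank $d$ contains all $\binom{d+1}{2}$ such roots. Hence the adjoint series of $\mathfrak{g}(p)$ is at least as long as that of $A_d(p)$, and no per-type case analysis of the lower terms is needed for the $\Omega(d^2)$ bound. Finally, be careful with your claim that each graded slice contains ``only a bounded number of root subgroups'' with exceptions confined to ``a constant number'' of short or long roots: the roots of $B_d,C_d,D_d$ that are not of type-$A$ form number $\Theta(d^2)$, and what actually happens (per the paper's remark following Theorem \ref{main result for A}) is that almost all of them still land in factors of order at most $p^3$, while a single factor in the middle of the series has order roughly $p^{d/2}$. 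Your proposal asserts the conclusion here without an argument that distinguishes the one genuinely large factor from the many non-type-$A$ roots.
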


\begin{cor}\label{cor:Aut(U)-series}
Let $U$ and $m$ be as in Theorem \ref{main result}. Then $U/U'$ has an $\Aut{U}$ invariant series of length $m$. There is at most one factor with dimension $1$, and if $U$ is of type $D$, then there is a factor of dimension $3$. All other factors have dimension $2$.
\end{cor}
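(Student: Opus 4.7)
The plan is to pull back the $\N^m$-grading of $L(\alpha)$ from Theorem \ref{main result} to the abelianization $U/U'$ and then identify the dimensions of the resulting factors by locating the images of the simple root subgroups.

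First, I would observe that because every subgroup in the filter $\alpha$ is characteristic in $U$, its image in $U/U'$ is $\Aut{U}$-invariant. The abelianization corresponds to the total-degree-$1$ part of $L(\alpha)$, which under the $\N^m$-grading decomposes as $\bigoplus_{i=1}^{m} L(\alpha)_{e_i}$, where $e_i$ is the $i$-th standard basis vector of $\N^m$. Any total order refining the pre-order on $\{e_1,\dots,e_m\}$ then yields a chain of $\Aut{U}$-invariant subgroups of $U/U'$ of length $m$, whose successive quotients are precisely the $L(\alpha)_{e_i}$.

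Second, I would compute the dimensions of these pieces. Since $U/U'$ is spanned by the images of the simple root subgroups $x_{\alpha_j}(t)$ (any $x_r(t)$ with $r$ non-simple lies in $U'$), the dimension of $L(\alpha)_{e_i}$ is just the number of simple roots assigned by $\alpha$ to the multi-degree $e_i$. For types $A$, $B$, and $C$ the adjoint refinement pairs simple roots along the Dynkin diagram from the ends inward, yielding $\lfloor d/2\rfloor$ or $\lceil d/2\rceil$ blocks of size two together with at most one singleton block responsible for the lone one-dimensional factor. For type $D$ the fork at the tail of the Dynkin diagram forces the two short-arm simple roots and the adjacent node to share a block, producing the dimension-$3$ factor, while the remaining simple roots again pair up.

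The hard part is this second step: verifying, case by case for $A_d$, $B_d$, $C_d$, and $D_d$, that the adjoint refinement distributes the simple roots into blocks in exactly the pattern described. This means tracing through the Chevalley commutator relations and the refinement step used in the proof of Theorem \ref{main result}, and dealing with $D_d$ as a separate base case because of the branch node. Once the block structure is pinned down, the corollary follows immediately because each simple root subgroup is one-dimensional over $\Z/p\Z$, so the factors $L(\alpha)_{e_i}$ have the asserted dimensions.
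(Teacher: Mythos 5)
Your outline is the paper's own route: the paper realizes the degree-one homogeneous pieces concretely as the factors of the chain $U\geq H_{m-1}\geq\cdots\geq H_1\geq\gamma_2$ from (\ref{series}), where each $H_k/H_{k-1}$ is spanned by a pair of simple root subgroups taken from the two ends of the Dynkin diagram working inward, and then simply reads off the orders $p$, $p^2$, $p^3$. The part you defer as ``the hard part'' is exactly what the paper supplies via the Jacobson radical computations (\ref{Jacobson types ABC})--(\ref{Jacobson type D}) and the explicit subgroups (\ref{H_1}). One detail in your sketch is wrong, though it does not affect the dimension count: in type $D$ the three-dimensional block is \emph{not} the two fork roots together with the adjacent branch node $p_{d-2}$; by (\ref{H_1}) it is $\langle X_{p_1},X_{p_{d-1}},X_{p_d}\rangle\gamma_2/\gamma_2$, i.e.\ the two fork roots together with a root at the opposite end of the diagram, while $p_{d-2}$ is paired with $p_2$ in the next block. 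So the pattern is still ``ends inward,'' with the fork contributing two roots to the first block rather than altering which interior nodes pair up.
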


As usual with these groups, the case $p=2$ requires additional care. Modest changes need to be made for characteristic two and are addressed in Remark \ref{p=2}, at the end of Section \ref{the alpha series section}. In addition, we have exclusively used the field $\Z/p\Z$, but we expect similar results for arbitrary finite fields. In this case, the order of the factors depends on the size of the field. For comments on how to generalize this approach to arbitrary finite fields, see Remark \ref{generalize to q} at the end of Section \ref{the alpha series section}.

The paper is organized as follows. We discuss the definition of a filter and necessary information in Section \ref{section of filters}. In Section \ref{general construction}, we give a method to compute an adjoint refinement. In Section \ref{the alpha series section}, we prove our main results, Theorem \ref{main result for A} and Theorem \ref{main result}.

\subsection{Notation}
We let $\N$ and $\Z^+$ denote the set of nonnegative integers and positive integers respectively. For a set $S$, we denote the power set of $S$ by $2^S$. For a group $G$ and for $x,y\in G$, we let $[x,y]$ denote $x^{-1}y^{-1}xy$. In general, we define $[x_1]=x_1$ and $[x_1,x_2,...,x_n,x_{n+1}]=[[x_1,...,x_n],x_{n+1}]$. If $H,K\leq G$, then $[H,K]=\left\langle [h,k]:h\in H,k\in K\right\rangle$. We use the same recursive notation for subgroups of $G$ as we do for elements of $G$. Throughout the paper, $p$ is a prime, and $\Z_p$ denotes the field $\Z/p\Z$.

We adopt the same notation for root systems and Chevalley groups as provided by Carter in \cite[Chapters 2 -- 4]{Carter72}. That is, we let $\Phi$ denote a system of roots. Define an ordering of the roots and let $\Phi^+$ and $\Phi^-$ denote the positive and negative roots respectively. Let $\Pi$ be the set of fundamental roots of $\Phi$, and let $\{ h_r: r\in\Pi\}\cup\{ e_s: s\in\Phi\}$ be a Chevalley basis for the Lie algebra $\mathfrak{g}$ over $\C$ for some Cartan decomposition. 

The Chevalley group of type $\mathfrak{g}$ over $\Z_p$, denoted $\mathfrak{g}(p)$, is the group of automorphisms of the Lie algebra $\mathfrak{g}_{\Z_p}=\mathfrak{g}\otimes_\Z\Z_p=\langle x_r(t) : r\in\Phi,t\in\Z_p\rangle$, where $x_r(t)=\text{exp}(t\text{ ad }e_r)$.
The root subgroup of $r\in\Phi$ is $X_r=\langle x_r(t) : t\in\Z_p\rangle$. The maximal unipotent subgroups of $\mathfrak{g}(p)$ are all conjugate to the group $U=\langle X_r : r\in \Phi^+\rangle$, so in our proofs we take $U=\langle X_r : r\in \Phi^+\rangle$. Note that unipotent subgroups stay the same in all the classical groups, including special and projective, so we do not need to be specific. 

\section{Filters}\label{section of filters}
Since Theorem \ref{main result for A} and Theorem \ref{main result} use a filter generation algorithm, we summarize the necessary ideas from \cite[Section 3]{Wilson13b} for the sake of completeness.
\begin{definition}\label{pre-order definition}
A \emph{pre-order} $\prec$ on a commutative monoid, $M$, is a reflexive and transitive relation, and for all $k,\ell,m,n \in M$, if $k\prec \ell$ and $m\prec n$, then $k+m\prec \ell +n$.
\end{definition}

\begin{definition}
A \emph{filter} of $G$ is a function $\phi: M\rightarrow 2^G$ such that for all $m,n\in M$, $\phi_m=\phi(m)$ is a subgroup of $G$, 
\[  [ \phi_m,\phi_n] \leq \phi_{m+n} \qquad \text{and}\qquad m\prec n \,\text{ implies }\, \phi_m\geq \phi_n.\]
\end{definition}

We remark that filters $\phi:\N\rightarrow 2^G$, with $\phi_0=G$, are exactly the \emph{$N$-series} introduced by Lazard \cite{Lazard54}. From the definition of a filter, $\phi_m\nor G$ for all $m\in M$.

Every filter induces a new filter $\partial \phi : M \rightarrow 2^G$ given by
\[ \partial\phi_m = (\partial\phi)_m = \prod_{s\in M-\{0\}} \phi_{m+s} = \langle \phi_{m+s} : s\in M-\{0\}\rangle .\]
It follows that for each $m,n\in M$,
\[ [ \partial\phi_m, \phi_n ] = \prod_{s\in M-\{0\}} [ \phi_{m+s},\phi_n ] =\prod_{s\in M-\{0\}} \phi_{m+s+n}\leq \prod_{s\in M-\{0\}} \phi_n\cap\phi_{m+s}\leq  \phi_{m+n}. \]
In particular, $[\partial\phi_m,\phi_m]\leq \phi_m$; thus, $\partial\phi_m\nor\phi_m$.
For $m\in M$, let
\begin{equation}\label{Ls}
L_m=\phi_m/\partial\phi_m.
\end{equation} 
Thus, by \cite[Theorem 3.3]{Wilson13b}, the abelian group, 
\begin{equation}\label{LieRing}
L(\phi)=\bigoplus_{m\in M} L_m,
\end{equation} is a Lie ring with product on the homogeneous components 
\begin{equation}\label{Lie ring product}
(\forall x\in\phi_s,\forall y\in\phi_t)\qquad \qquad [\partial\phi_sx,\partial\phi_ty]=\partial\phi_{s+t}[x,y].
\end{equation}
Note that if $\phi$ is a filter such that $\phi$ produces an $N$-series of $G$, then $\partial\phi$ is also an $N$-series and $\partial\phi_m=N_{m+1}$. In that case, $L(\phi)$ is the Lie ring described by Lazard cf. \cite[Theorem 2.1]{Lazard54}. 

Suppose $\mathcal{S}$ generates $M$ as a monoid, and $0\in\mathcal{S}$. Let $\mathcal{G}=\mathcal{G}(M,\mathcal{S})$ be the (directed) Cayley graph whose vertices are $M$ and whose labeled edge set is $\{ m\overset{s}\longrightarrow n: m+s=n, s\in\mathcal{S}\}$. Furthermore, let $\mathcal{G}_m^n$ denote the set of all paths, ${\bf t}$, from vertex $m$ to vertex $n$ in $\mathcal{G}$. We write a path ${\bf t}$ as a sequence of edge labels the path traverses. That is, for each $s_i\in\mathcal{S}$,  ${\bf t} =(s_1,...,s_k)$ where $m+ s_1+\cdots +s_k = n$. Suppose $\pi : \mathcal{S} \rightarrow 2^G$ is a function, and for simplicity, we denote $[\pi_{s_1},...,\pi_{s_k}]$ by $[\pi_{\bf t}]$ if ${\bf t} = (s_1,...,s_k)$.
Define a new function $\bar{\pi}:M\rightarrow 2^G$ by
\begin{equation}\label{pi bar} 
\bar{\pi}_m=\prod_{{\bf t}\in\mathcal{G}_0^m} [ \pi_{\bf t} ].
\end{equation}

We close this section with sufficient conditions on $M$, $\mathcal{S}$ and $\pi$ so that $\bar{\pi}$ is a filter.
\begin{definition}
A \emph{generalized refinement monoid} $(M,\prec)$ is a commutative pre-ordered monoid with minimal element $0$ and if $m\prec n$ and $n=\sum_{i=1}^rn_i$, then there exists $m_i\prec n_i$ where $m=\sum_{i=1}^rm_i$.
\end{definition}

\begin{thm}[{Wilson \cite[Theorem 3.9]{Wilson13b}}]\label{generating filter}
Suppose $(M,\prec)$ is a generalized refinement monoid and $0\in\mathcal{S}\subset M$ is an interval closed generating set of $M$. If $\pi:\mathcal{S}\rightarrow 2^G$ is an order reversing function into the set of normal subgroups of $G$, then $\bar{\pi}$ is a filter.
\end{thm}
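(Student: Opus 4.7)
The plan is to verify, in order, the three defining properties of a filter for the function $\bar{\pi}$: (i) each $\bar{\pi}_m$ is a (normal) subgroup of $G$; (ii) the order-reversing condition $m\prec n \Rightarrow \bar{\pi}_m\geq \bar{\pi}_n$; and (iii) the commutator containment $[\bar{\pi}_m,\bar{\pi}_n]\leq \bar{\pi}_{m+n}$. Property (i) is essentially free from the hypotheses: each $\pi_s$ is normal in $G$, and an iterated commutator of normal subgroups is again normal, so every factor $[\pi_{\mathbf{t}}]$ appearing in the defining product is normal in $G$. Since products (joins) of normal subgroups are normal, $\bar{\pi}_m\nor G$.

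For (ii), the generalized refinement axiom is exactly what is needed. Take $m\prec n$ and any generating commutator $[\pi_{\mathbf{t}}]$ of $\bar{\pi}_n$ coming from a path $\mathbf{t}=(s_1,\ldots,s_k)\in\mathcal{G}_0^n$, so that $n=s_1+\cdots+s_k$. The refinement property supplies $m_1,\ldots,m_k$ with $m_i\prec s_i$ and $m=m_1+\cdots+m_k$. Since $0\prec m_i\prec s_i$ and $\mathcal{S}$ is interval closed, each $m_i\in \mathcal{S}$, so $(m_1,\ldots,m_k)$ is a path in $\mathcal{G}_0^m$. Because $\pi$ is order reversing, $\pi_{m_i}\geq \pi_{s_i}$, and monotonicity of the commutator in each slot gives
\[ [\pi_{s_1},\ldots,\pi_{s_k}]\;\leq\; [\pi_{m_1},\ldots,\pi_{m_k}]\;\leq\; \bar{\pi}_m. \]
Since this holds for every generator of $\bar{\pi}_n$, we conclude $\bar{\pi}_n\leq \bar{\pi}_m$.

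For (iii), the strategy is to reduce an arbitrary commutator $[[\pi_{\mathbf{s}}],[\pi_{\mathbf{t}}]]$ with $\mathbf{s}\in\mathcal{G}_0^m$ and $\mathbf{t}\in\mathcal{G}_0^n$ to a product of left-normed commutators $[\pi_{u_1},\ldots,\pi_{u_{k+\ell}}]$ whose label sequence $(u_1,\ldots,u_{k+\ell})$ is some reordering of $(s_1,\ldots,s_k,t_1,\ldots,t_\ell)$. Any such reordering is automatically a path from $0$ to $m+n$ in $\mathcal{G}$, so the resulting factor lies in $\bar{\pi}_{m+n}$ by definition. The reduction is carried out by induction on $k+\ell$, repeatedly applying the commutator identities $[xy,z]=[x,z]^y[y,z]$ and $[x,yz]=[x,z][x,y]^z$, together with the Hall--Witt identity to pull the nested brackets apart. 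The normality of each $\pi_s\nor G$ is used in an essential way to absorb conjugates: a conjugate of a generator of some $[\pi_{\mathbf{r}}]$ again lies in $[\pi_{\mathbf{r}}]$, so all auxiliary conjugations that appear during the expansion can be discarded without leaving the relevant subgroup.

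The main obstacle I expect is this last bookkeeping step, namely verifying carefully that after commutator expansion the conjugation and inversion terms that Hall--Witt produces can all be absorbed into the subgroups $[\pi_{\mathbf{u}}]$ indexed by paths from $0$ to $m+n$. This is where normality of the generating subgroups $\pi_s$ is used most heavily, and where one must check that the induction parameter (total path length) actually decreases at each step. Once this is done, (iii) follows, and combined with (i) and (ii), $\bar{\pi}$ is a filter.
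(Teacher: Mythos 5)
The paper does not actually prove this statement---it is quoted from Wilson \cite[Theorem 3.9]{Wilson13b}---so your argument can only be measured against the standard proof. Your outline is the right one, and parts (i) and (ii) are complete and correct: normality of iterated commutators of normal subgroups gives (i), and (ii) is exactly the intended use of the hypotheses---the generalized refinement axiom splits $m=\sum_i m_i$ along any path $n=\sum_i s_i$, interval closure of $\mathcal{S}$ together with minimality of $0$ guarantees each $m_i\in\mathcal{S}$ (so the refined sequence really is a path in $\mathcal{G}_0^m$), and order reversal of $\pi$ plus monotonicity of the commutator in each slot finishes it.

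The only place you have deferred real work is (iii), and what you need there is the classical commutator-collection fact (essentially P.~Hall's; it follows by induction from the Three Subgroups Lemma in the form $[[A,B],C]\leq[[A,C],B]\,[[B,C],A]$ for normal subgroups): if $A_1,\dots,A_a,B_1,\dots,B_b\nor G$, then
\[ \bigl[[A_1,\dots,A_a],[B_1,\dots,B_b]\bigr]\;\leq\;\prod_{\sigma}[C_{\sigma(1)},\dots,C_{\sigma(a+b)}], \]
where $(C_1,\dots,C_{a+b})=(A_1,\dots,A_a,B_1,\dots,B_b)$ and $\sigma$ runs over permutations. Every reordered label sequence is again a path from $0$ to $m+n$ because $M$ is commutative and all labels lie in $\mathcal{S}$, so each factor lands in $\bar{\pi}_{m+n}$; combined with $[\prod_i X_i,Y]=\prod_i[X_i,Y]$ for normal subgroups (which reduces $[\bar{\pi}_m,\bar{\pi}_n]$ to single pairs of paths), this closes (iii). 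Your proposed induction on $k+\ell$ using Hall--Witt and absorption of conjugates by normality is precisely how that lemma is proved, so there is no wrong turn here---only a standard lemma that should be either cited or written out in full rather than described.
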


\section{Constructing the Stable Adjoint Series}\label{general construction}

For Theorems \ref{main result for A} and \ref{main result}, we will use the adjoint series introduced by Wilson \cite[Section 4]{Wilson13b}. To produce the adjoint series (or \emph{$\alpha$-series}), we iterate a refinement process until it stabilizes. There are two major steps in computing the adjoint series: finding the new subgroups to add at the top of the series and generating the lower terms of the series. The former is nearly independent from the latter, so we can find all the new subgroups that get added to the top section (successively) before we start to generate the lower terms. This is essentially how we approach our investigation of the adjoint series in Section \ref{the alpha series section}. Figure \ref{process} gives a visualization of the basic process. 

\begin{figure}
\begin{center}
\includegraphics[scale=0.6]{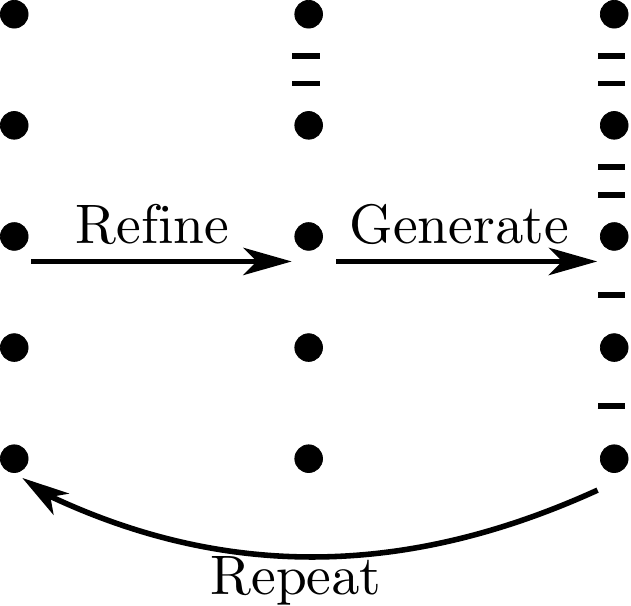}
\caption{Iterating the adjoint refinement process.}\label{process}
\end{center}
\end{figure}

We begin by describing how we obtain new subgroups. Let $\phi:(\N,\leq)\rightarrow 2^G$ be a filter, and set $\alpha^{(1)}_n=\phi_n$ for all $n\in \N$. In our construction, we use the lower central series of $G$ as our initial filter, which is indexed by $\Z^+$. This allows for the opportunity to record operators (e.g. the holomorph of $G$) at the top of the filter, $\phi_0$, even though we presently take $\phi_0$ to be $G$. We define the $\alpha^{(1)}$-series to be the filter $\alpha^{(1)}:\N\rightarrow 2^G$, and in general, the $\alpha^{(k)}$-series is a filter $\alpha^{(k)}:(\N^k,\prec) \rightarrow 2^G$ inductively produced as follows, where $\prec$ is the lexicographic order. 

As established in Section \ref{section of filters}, if $n\in\N^k$, then $L_n^{(k)} = \alpha^{(k)}_n / \partial\alpha^{(k)}_{n}$
is a homogenous component of the associated $\N^k$-graded Lie algebra $L(\alpha^{(k)})$ cf. (\ref{Ls}) and (\ref{LieRing}). To obtain the $\alpha^{(k+1)}$-series from the $\alpha^{(k)}$-series for $k\geq 1$, we use the graded product map $\circ:L_{s}^{(k)}\times L_{t}^{(k)}\rightarrow L_{s+t}^{(k)}$ given by communtation in $G$ cf. (\ref{Lie ring product}).
\begin{definition}
The \emph{adjoint ring of} $\circ$ is
\begin{align*}
\Adj{\circ}=&\left\{ (f,g)\in\Endo{L_{s}^{(k)}}\times\op{\Endo{L_{t}^{(k)}}}: \right.\\
&\qquad  \left. \forall u\in L_{s}^{(k)}, \forall v\in L_{t}^{(k)}, uf\circ v=u\circ gv \right\}.
\end{align*}
\end{definition}
This is our source of new (characteristic) subgroups. Another characterization of the adjoint ring of $\circ$ is to define it as the ring for which $\circ$ factors through $\otimes_{\Adj{\circ}}:L_s^{(k)}\times L_t^{(k)}\rightarrow L_s^{(k)}\otimes_{\Adj{\circ}}L_t^{(k)}$ uniquely. This latter characterization implies that the properties of $\Adj{\circ}$ influence the properties of $\circ$, and hence commutation in $G$. See \cite[Section 2]{Wilson13a} for further details on adjoints. 

We choose $(s,t)\in\N^k\times\N^k$ to be the lex least pair where the Jacobson radical of $\Adj{\circ}$ is nontrivial. Let $J$ be the Jacobson radical of $\Adj{\circ}$, and let $J^0=\Adj{\circ}$. Recursively define $J^{i+1}=J^iJ$ for all $i\in\N$. For all $i\in\N$, define $H_i$ so that $\alpha_{s}^{(k)}\geq H_i\geq \partial\alpha_{s}^{(k)}$ and
\begin{equation} \label{alphas}
H_i/\partial\alpha_{s}^{(k)} = L_{s}^{(k)}J^i.
\end{equation}
If $J=0$, then we get no new subgroups. If $J=0$ for each $(s,t)\in\N^k\times\N^k$, then the $\alpha^{(k)}$-series has no nontrivial adjoint refinement. 

To incorporate these new subgroups into a filter, we first obtain a generating set for $\N^{k+1}$ which includes the indices of the new subgroups. Let 
\begin{equation}\label{Genset} 
\mathcal{S}_{k+1}=\{ (n,i)\in\N^k\times\N : n\preceq s\},
\end{equation} 
so that $\mathcal{S}_{k+1}$ is interval closed and generates $\N^{k+1}$. For $(n,i)\in\N^k\times\N$, define
\begin{equation}\label{pi for a2}  
\pi^i_n=\left\{ \begin{array}{ll} \alpha_{n}^{(k)} & \text{if } n\prec s, \\ H_{i} & \text{if } n = s.\end{array}\right.
\end{equation}
Observe that $\pi$ is a function from $\mathcal{S}_{k+1}$ into the normal subgroups of $G$ (which is totally ordered with respect to $\prec$ the lexicographic order) which satisfies the conditions of Theorem \ref{generating filter}. Thus, $\bar{\pi}:\N^{k+1}\rightarrow 2^G$ is a filter, and we set $\alpha^{(k+1)}=\bar{\pi}$. We refer to the filter $\alpha^{(k+1)}$ as the $\alpha^{(k+1)}$-series.

We show that the adjoint series is a characteristic series.
\begin{proposition}\label{Characteristic}
If the initial filter, $\phi:\N\rightarrow 2^G$, is a characteristic series, then the adjoint series of $G$ is a characteristic series.
\end{proposition}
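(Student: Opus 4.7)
I would prove this by induction on $k$, showing that every term of the $\alpha^{(k)}$-series is a characteristic subgroup of $G$; since the stable adjoint series is a union/limit of the terms occurring in some $\alpha^{(k)}$, this suffices. The base case $k=1$ is exactly the hypothesis that $\phi=\alpha^{(1)}$ is characteristic.

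For the inductive step, assume every $\alpha^{(k)}_n$ is characteristic for $n\in\N^k$. First, $\partial\alpha^{(k)}_n$ is characteristic, since it is generated by characteristic subgroups. Consequently, each $\sigma\in\Aut(G)$ descends to an additive group automorphism $\bar\sigma_n$ of the quotient $L^{(k)}_n=\alpha^{(k)}_n/\partial\alpha^{(k)}_n$. Because $\sigma$ preserves commutation in $G$, these induced maps are compatible with the graded product, i.e.\ $\bar\sigma_{s+t}(u\circ v)=\bar\sigma_s(u)\circ\bar\sigma_t(v)$ for $u\in L^{(k)}_s$, $v\in L^{(k)}_t$. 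This is the key equivariance that I plan to leverage.

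The central step is to transfer this equivariance to $\Adj{\circ}$. For $(f,g)\in\Adj{\circ}$, I would define $(f,g)^\sigma=(\bar\sigma_s^{-1}f\bar\sigma_s,\,\bar\sigma_t^{-1}g\bar\sigma_t)$ and verify directly from the defining equation $uf\circ v=u\circ gv$ that $(f,g)^\sigma\in\Adj{\circ}$. A short computation then shows $\sigma\mapsto(\cdot)^\sigma$ is a ring automorphism of $\Adj{\circ}$. Since the Jacobson radical is invariant under all ring automorphisms, $J^\sigma=J$, and hence $(J^i)^\sigma=J^i$ for every $i\in\N$. It follows that $L^{(k)}_s J^i$ is stable under $\bar\sigma_s$, and so its preimage $H_i$ in $\alpha^{(k)}_s$ (as defined in (\ref{alphas})) is characteristic in $G$.

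Finally, I would show $\alpha^{(k+1)}=\bar\pi$ is a series of characteristic subgroups. By (\ref{pi for a2}), every $\pi^i_n$ is characteristic: for $n\prec s$ this is the inductive hypothesis, and for $n=s$ it is the previous paragraph. Then formula (\ref{pi bar}) exhibits $\bar\pi_m$ as a product of iterated commutators $[\pi_{\mathbf t}]$ of characteristic subgroups, hence itself characteristic. This closes the induction. The only real obstacle is the verification in the third paragraph that conjugation by $\bar\sigma$ respects the adjoint-ring structure — once one observes that the defining relation of $\Adj{\circ}$ is preserved intact by $\Aut(G)$-equivariance of $\circ$, the Jacobson radical argument and everything downstream is automatic.
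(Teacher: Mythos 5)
Your proof is correct and follows essentially the same route as the paper: the paper likewise inducts on $k$, observes that $\Aut{G}$ induces maps compatible with $\circ$ (citing Wilson's result that $\Aut{G}$ maps into $\PIsom{\circ}$, where you verify the equivariance directly), shows this gives a conjugation action on $\Adj{\circ}$ (its Lemma \ref{PIsom(M)}, with the conjugate of $g$ written as $hgh^{-1}$ --- equivalent to your formula once one accounts for the opposite-ring composition), and concludes that $J$ and its powers, hence the $H_i$ and all terms of $\bar\pi$, are characteristic. No substantive difference.
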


To prove the proposition, we show that $\Aut{G}$ acts on $\Adj{\circ}$ via conjugation. From \cite[Proposition 3.8]{Wilson09}, $\Aut{G}$ maps into the \emph{pseudo-isometries} of $\circ$, which are defined to be 
\[ \PIsom{\circ}=\left\{ (h,\hat{h})\in\Aut{L_{s}^{(k)}}\times\Aut{L_{2s}^{(k)}}: xh\circ hy = (x\circ y)\hat{h}\right\}.\] 
The pseudo-isometries of $\circ$ act on $\Adj{\circ}$ by conjugation as the following lemma proves.

\begin{lem}\label{PIsom(M)}
$\PIsom{\circ}$ acts on $\Adj{\circ}$ by
\[ (f,g)^{(h,\hat{h})}=(h^{-1}fh,hgh^{-1})\in\Adj{\circ},\]
for $(f,g)\in\Adj{\circ}$ and $(h,\hat{h})\in\PIsom{\circ}$. Furthermore, this action is faithful.
\end{lem}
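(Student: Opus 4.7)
My plan is to verify three things: that the formula $(h^{-1}fh,\,hgh^{-1})$ sends $\Adj{\circ}$ into itself, that conjugation defines a group action of $\PIsom{\circ}$, and that this action is faithful. The first (main) claim is a direct computation; the second reduces to associativity once the opposite-ring convention is unwound; the third is a centralizer argument combined with a generation statement for $L_{2s}^{(k)}$.

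For the main computation, fix $(f,g)\in\Adj{\circ}$ and $(h,\hat h)\in\PIsom{\circ}$. Using the paper's conventions, $u(h^{-1}fh)$ functionally equals $h\bigl(f(h^{-1}(u))\bigr)$, and $(hgh^{-1})v$ in the opposite ring $\op{\Endo{L_t^{(k)}}}$ equals $h\bigl(g(h^{-1}(v))\bigr)$. The plan is to start from
\[
u(h^{-1}fh)\circ v \;=\; h(f(h^{-1}(u)))\circ v,
\]
insert $v=h(h^{-1}(v))$, and apply the pseudo-isometry identity $h(x)\circ h(y)=\hat h(x\circ y)$ to extract $\hat h$, yielding $\hat h\bigl(f(h^{-1}(u))\circ h^{-1}(v)\bigr)$. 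The adjoint identity on $(f,g)$ then rewrites the inner pairing as $h^{-1}(u)\circ g(h^{-1}(v))$, and a second use of the pseudo-isometry identity in the reverse direction absorbs $\hat h$ back into the pairing to give $h(h^{-1}(u))\circ h(g(h^{-1}(v)))=u\circ h(g(h^{-1}(v)))$, which is exactly $u\circ (hgh^{-1})v$. This confirms that the conjugated pair lies in $\Adj{\circ}$.

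The remaining group-action axioms are routine: the identity $(1,1)\in\PIsom{\circ}$ acts trivially, and composition of two conjugations corresponds to conjugation by the product once one consistently reads the second coordinate in $\op{\Endo{L_t^{(k)}}}$. For faithfulness, suppose $(h,\hat h)\in\PIsom{\circ}$ acts trivially on $\Adj{\circ}$. Then $h$ centralizes every first coordinate of an adjoint pair; since the subring of first coordinates is rich enough (it contains the identity and separates points of $L_s^{(k)}$ via the pairing $\circ$), one forces $h$ to be the identity on $L_s^{(k)}$. Once $h=1$, the pseudo-isometry identity collapses to $x\circ y=\hat h(x\circ y)$, and because $L_{2s}^{(k)}$ is generated as an abelian group by the commutator products $x\circ y$ cf.\ (\ref{Lie ring product}), this forces $\hat h=1$ as well. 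The subtlest step, and the one I expect to be the main obstacle, is showing that the first-coordinate subring of $\Adj{\circ}$ is large enough to separate the points of $L_s^{(k)}$; this presumably rests on the nondegeneracy built into the construction of $\Adj{\circ}$.
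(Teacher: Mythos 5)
Your main computation is exactly the paper's proof of the first assertion: the paper writes the same three-step chain
\[ xh^{-1}fh\circ y=(xh^{-1}f\circ h^{-1}y)\hat h=(xh^{-1}\circ gh^{-1}y)\hat h=x\circ hgh^{-1}y \]
in right-action notation, using the pseudo-isometry identity, the adjoint identity for $(f,g)$, and the pseudo-isometry identity again; your functional rewriting of this is correct and identical in substance.

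The divergence, and the genuine gap, is in faithfulness. The paper does not argue this internally at all; it asserts that faithfulness holds ``because $h$ and $\hat h$ are automorphisms'' and defers to \cite[Proposition 4.16]{Wilson09}. Your self-contained centralizer argument does not close. If $(h,\hat h)$ acts trivially, all you learn is that $h$ lies in the centralizer, inside $\Aut{L_s^{(k)}}$, of the first-coordinate image of $\Adj{\circ}$, and that centralizer is never trivial when $p\geq 3$: since $\Adj{\circ}$ is a $\Z_p$-algebra containing $(1,1)$, every first coordinate commutes with the scalar $\lambda\cdot 1$ for each $\lambda\in\Z_p^{\times}$, and $(\lambda\cdot 1,\lambda^{2}\cdot 1)$ is itself a pseudo-isometry that conjugates $\Adj{\circ}$ trivially. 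So no strengthening of ``the first coordinates separate points of $L_s^{(k)}$'' can force $h=1$; the most your argument can yield is that $h$ is central in a ring containing the adjoint ring. A second, smaller issue: your step forcing $\hat h=1$ needs $L_{2s}^{(k)}$ to be spanned by the products $x\circ y$, which the filter axioms alone do not guarantee (they give only $[\phi_s,\phi_s]\leq\phi_{2s}$, not equality modulo $\partial\phi_{2s}$). Both points are precisely what the paper's citation is carrying; to complete the proof you should either invoke that external result as the paper does, or restate the faithfulness claim in a form that is insensitive to the scalar pseudo-isometries.
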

\begin{proof} 
Let $x,y\in L_{e_1}^{(k)}$. It follows that $\PIsom{\circ}$ acts on $\Adj{\circ}$ as 
\[ xh^{-1}fh \circ y = (xh^{-1}f\circ h^{-1}y)\hat{h} = (xh^{-1}\circ gh^{-1}y)\hat{h} = x\circ hgh^{-1}y. \]
It follows that this action is faithful because $h$ and $\hat{h}$ are automorphisms cf. \cite[Proposition 4.16]{Wilson09}.
\end{proof}

\begin{proof}[Proof of Proposition \ref{Characteristic}] 
By \cite[Proposition 3.8]{Wilson09}, $\Aut{G}$ maps into $\PIsom{\circ}$. Therefore, by Lemma \ref{PIsom(M)}, $\Aut{G}$ acts on $\Adj{\circ}$ by conjugation. Furthermore, since $J$ is the intersection of all maximal ideals in $\Adj{\circ}$, it follows that the action of $\Aut{G}$ fixes $J$. Thus, $J^i$ is fixed by the action of $\Aut{G}$ for every $i\in\Z^+$. Therefore, $L_{e_1}^{(k)}J^n$ is characteristic, and hence, $\pi_n$ is characteristic for $n\in\N^{k+1}$, provided $\alpha^{(k)}_{(n_1,...,n_k)}$ is characteristic. Since $\phi_m$ is characteristic for $m\in\N$, it follows by induction that each term in the $\alpha^{(k+1)}$-series is characteristic.
\end{proof}

\section{The Stable Adjoint Refinement of Classical Unipotent Groups}\label{the alpha series section}
In this section we prove Theorems \ref{main result for A} and \ref{main result}. Most of our work goes into proving Theorem \ref{main result for A}; we will see that Theorem \ref{main result} follows from the proof of Theorem \ref{main result for A}. Recall that the adjoint series is a refinement of some other (characteristic) series. Let $U\leq \mathfrak{g}(p)$ be a unipotent subgroup. Our initial series is the lower central series and we denote the $k^{th}$ term of the series by $\gamma_k(U)$ or $\gamma_k$. We let $\gamma_0(U)=U$, so that $\N$ indexes our filter. Let $\circ$ be the graded product map given in (\ref{Lie ring product}).

For details on Chevalley groups, root systems, and Lie algebras see \cite{Carter72}. Recall the Chevalley commutator formula
\begin{thm}[Chevalley]
Let $u,t\in\Z_p$ and $s,r\in\Phi$. Then for each $i,j>0$ with $ir+js\in\Phi$, there exists constants $C_{ijrs}$ such that 
\begin{equation}\label{commutation}
[x_s(u),x_r(t)] = \prod_{i,j>0}x_{ir+js}(C_{ijrs} (-t)^iu^j)
\end{equation}
where the product is taken in increasing order of $i+j$.
\end{thm}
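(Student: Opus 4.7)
The plan is to reduce the identity to a computation in the completed universal enveloping algebra $\hat{U}(\mathfrak{g}_\Q)$, where each $x_r(t)$ is realized as the formal exponential $\exp(t\,\mathrm{ad}\,e_r)$, and then descend to $\Z_p$ via Chevalley's integrality theorem for the basis $\{h_r\}\cup\{e_s\}$.

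First I would record the Baker--Campbell--Hausdorff identities for formal exponentials of locally nilpotent derivations, in particular the conjugation formula $\exp(X)\exp(Y)\exp(-X)=\exp(e^{\mathrm{ad}\,X}Y)$. Applying this to $x_s(-u)x_r(-t)x_s(u)x_r(t)$ and expanding, one obtains a formal exponential of a sum of nested commutators of the form $(\mathrm{ad}\,e_s)^i(\mathrm{ad}\,e_r)^j(\cdot)$. Since $\mathrm{ad}\,e_s$ shifts the root grading by $s$ and $\mathrm{ad}\,e_r$ shifts it by $r$, each surviving term lies in $\mathfrak{g}_{ir+js}$; when $ir+js\notin\Phi\cup\{0\}$, the corresponding weight space is $0$ and the term drops out. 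Regrouping factors of $(-t)^iu^j$ produces a finite product of the form $\prod x_{ir+js}(C_{ijrs}(-t)^iu^j)$ for rational constants $C_{ijrs}$.

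The main obstacle is proving that the $C_{ijrs}$ are integers, so that the formula makes sense after reducing modulo $p$. This rests on Chevalley's integrality theorem: the structure constants $N_{rs}$ in $[e_r,e_s]=N_{rs}e_{r+s}$ are integers, and every divided power $(\mathrm{ad}\,e_r)^n/n!$ preserves the $\Z$-lattice $\mathfrak{g}_\Z$. Proving this uses the classification of $\mathfrak{sl}_2$-strings $\{s+kr\in\Phi\}$ and the explicit action of $\mathrm{ad}\,e_r$ on the $\mathfrak{sl}_2$-triple $\{e_r,h_r,e_{-r}\}$; see \cite[Chapters 2--4]{Carter72}.

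Finally, the ordering of the product by $i+j$ is essential because distinct orderings yield different constants $C_{ijrs}$. Grouping by $i+j$ corresponds to the natural height filtration on the sub-lattice $\Z r+\Z s$: if $ir+js$ and $i'r+j's$ both have level $n=i+j=i'+j'$, then any Chevalley commutator between $X_{ir+js}$ and $X_{i'r+j's}$ lies in root subgroups at strictly higher level $2n$, so the ambiguity among same-level factors is absorbed into factors appearing later in the product, and the identity is uniquely determined once a convention for each level is fixed.
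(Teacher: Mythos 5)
The paper does not actually prove this statement: it is quoted as a classical theorem of Chevalley, with the constants $C_{ijrs}$ referred to \cite[p.~77]{Carter72}, so there is no in-paper argument to compare against. Your outline is essentially the standard proof from Carter (Lemma 5.2.1 and Theorem 5.2.2): realize $x_r(t)$ as $\exp(t\,\mathrm{ad}\,e_r)$ on the adjoint module, compute the commutator formally over $\Q$ using the root grading and Baker--Campbell--Hausdorff, verify integrality of the constants via the $\mathfrak{sl}_2$-string analysis and the fact that the divided powers $(\mathrm{ad}\,e_r)^n/n!$ preserve the Chevalley lattice, and then reduce the resulting polynomial identity in $t,u$ modulo $p$. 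Two places are thinner than they should be. First, the sentence ``regrouping factors of $(-t)^iu^j$ produces a finite product of the form $\prod x_{ir+js}(C_{ijrs}(-t)^iu^j)$'' is where the real work lives: passing from the exponential of a sum of graded terms to an \emph{ordered product} of root-subgroup elements requires the collection argument (induction on the level $i+j$, using that a commutator of level-$n$ and level-$n'$ factors lies in levels $\geq n+n'$ and that the relevant rank-two root strings are finite). You invoke this mechanism only at the end, to justify the ordering convention, but it is also what establishes the existence of the product form in the first place, so it should be stated as the inductive engine of the proof rather than as an afterthought. Second, the formula tacitly requires $r+s\neq 0$ (if $s=-r$, the group $\langle X_r,X_s\rangle$ is not unipotent and no such identity holds); this hypothesis is missing from the paper's statement as well, and your argument uses it silently when asserting that every surviving weight $ir+js$ is a root. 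Neither point is a wrong turn --- they are exactly the details one would fill in from \cite{Carter72} --- but as written the proposal asserts the crux rather than proving it.
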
 
The details for the constants $C_{ijrs}$ can be found in \cite[p. 77]{Carter72}. 
We denote the fundamental roots, $p_i$, to be consistent with the Dynkin diagram for $\mathfrak{g}$. 
That is, $p_1$ is connected to $p_2$, $p_2$ is connected to both $p_1$ and $p_3$, and so on. 
The Dynkin diagrams for different types of root systems are given in Figure \ref{Dd}.
\begin{figure}
\begin{center}\begin{tabular}{m{1cm}m{6.2cm}}
$A_d$: &
\begin{tikzpicture}
\GraphInit[vstyle=Hasse]
\SetGraphUnit{1.5}
\tikzset{VertexStyle/.style = {draw,
                               shape          = \VertexShape,
                               color          = \VertexLineColor,
                               fill           = \VertexLightFillColor,
                               inner sep      = 0pt,
                               text           = \VertexTextColor,
                               minimum size   = 5pt,
                               line width     = 1pt}}
\Vertex[x=0,y=0]{p1}
\Vertex[x=1,y=0]{p2}
\Vertex[x=2,y=0]{p3}
\Vertex[x=4,y=0]{pd2}
\Vertex[x=5,y=0]{pd1}
\Vertex[x=6,y=0]{pd}
\Edge[](p1)(p2)
\Edge[](p2)(p3)
\Edge[](pd2)(pd1)
\Edge[label=$\cdots$](p3)(pd2)
\Edge[](pd1)(pd)
\end{tikzpicture}\\
$B_d$: &
\begin{tikzpicture}
\GraphInit[vstyle=Hasse]
\SetGraphUnit{1.5}
\tikzset{VertexStyle/.style = {draw,
                               shape          = \VertexShape,
                               color          = \VertexLineColor,
                               fill           = \VertexLightFillColor,
                               inner sep      = 0pt,
                               text           = \VertexTextColor,
                               minimum size   = 5pt,
                               line width     = 1pt}}
\Vertex[x=0,y=0]{p1}
\Vertex[x=1,y=0]{p2}
\Vertex[x=2,y=0]{p3}
\Vertex[x=4,y=0]{pd2}
\Vertex[x=5,y=0]{pd1}
\Vertex[x=6,y=0]{pd}
\Edge[](p1)(p2)
\Edge[](p2)(p3)
\Edge[](pd2)(pd1)
\Edge[label=$\cdots$](p3)(pd2)
\path[thick] (pd1.north) edge (pd.north);
\path[thick] (pd1.south) edge (pd.south);
\end{tikzpicture}\\
$C_d$: &
\begin{tikzpicture}
\GraphInit[vstyle=Hasse]
\SetGraphUnit{1.5}
\tikzset{VertexStyle/.style = {draw,
                               shape          = \VertexShape,
                               color          = \VertexLineColor,
                               fill           = \VertexLightFillColor,
                               inner sep      = 0pt,
                               text           = \VertexTextColor,
                               minimum size   = 5pt,
                               line width     = 1pt}}
\Vertex[x=0,y=0]{pd}
\Vertex[x=1,y=0]{pd1}
\Vertex[x=2,y=0]{pd2}
\Vertex[x=4,y=0]{p3}
\Vertex[x=5,y=0]{p2}
\Vertex[x=6,y=0]{p1}
\Edge[](p1)(p2)
\Edge[](p2)(p3)
\Edge[](pd2)(pd1)
\Edge[label=$\cdots$](p3)(pd2)
\path[thick] (pd1.north) edge (pd.north);
\path[thick] (pd1.south) edge (pd.south);
\end{tikzpicture}\\
$D_d$: &
\begin{tikzpicture}
\GraphInit[vstyle=Hasse]
\SetGraphUnit{1.5}
\tikzset{VertexStyle/.style = {draw,
                               shape          = \VertexShape,
                               color          = \VertexLineColor,
                               fill           = \VertexLightFillColor,
                               inner sep      = 0pt,
                               text           = \VertexTextColor,
                               minimum size   = 5pt,
                               line width     = 1pt}}
\Vertex[x=0,y=0]{p1}
\Vertex[x=1,y=0]{p2}
\Vertex[x=2,y=0]{p3}
\Vertex[x=4,y=0]{pd2}
\Vertex[x=5,y=0]{pd1}
\Vertex[x=6,y=0.5]{pdtop}
\Vertex[x=6,y=-0.5]{pdbot}
\Edge[](p1)(p2)
\Edge[](p2)(p3)
\Edge[](pd2)(pd1)
\Edge[label=$\cdots$](p3)(pd2)
\Edge[](pd1)(pdtop)
\Edge[](pd1)(pdbot)
\end{tikzpicture}
\end{tabular}\end{center}\caption{The Dynkin diagrams for the different classical types of root systems.}\label{Dd}
\end{figure}
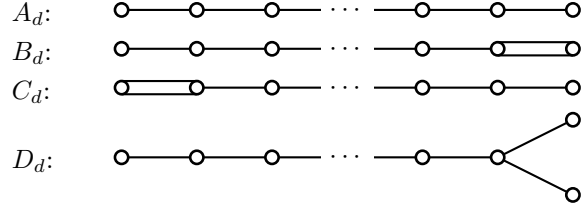
We choose an ordering of the fundamental roots $\Pi$ so that $p_i\prec_{\Pi} p_j$ if $i<j$. Thus, if $(r,s)$ is an extra-special pair of roots (i.e. $r+s\in\Phi$, $0\prec_{\Pi} r\prec_{\Pi} s$ and for all $r_1+s_1=r+s$, $r\preceq_{\Pi} r_1$), then $[e_r,e_s]=-(v+1)e_{r+s}$; cf. \cite[p. 58]{Carter72}.

Note that for every $r\in\Phi^+$, we can write 
\begin{equation}\label{decomp}
r=p_{i_1}+\cdots +p_{i_k},
\end{equation} 
for (not necessarily distinct) $p_{i_j}\in\Pi$. Thus, we may talk about the \emph{height} of each (positive) root $r$ denoted $h(r)$ which is the sum of the integer coefficients of $r$ when written as in (\ref{decomp}). We say a root subgroup $X_r$ has height $m$ if $h(r)=m$. Let $U_m$ denote the subgroup generated by all $X_r$ such that $h(r)\geq m$. As the next lemma states, these subgroups almost always coincide with the lower central series of $U$. 

\begin{lem}[{Spitznagel \cite[Theorem 1]{Spitznagel68}}]\label{Gibbs Lemma}
Let $U$ be a maximal unipotent subgroup of a classical Chevalley group over $\Z_p$. If $U$ is of type $B$ or $C$ and $p=2$, then $\gamma_2(U) \ne U_2 = \langle X_r : h(f) \geq 2, r\in \Phi^+\rangle$. Otherwise, $\gamma_m(U)=U_m$ for all $m$. 
\end{lem}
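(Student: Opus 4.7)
The plan is to prove both inclusions $\gamma_m(U) \subseteq U_m$ and $U_m \subseteq \gamma_m(U)$ by induction on $m$, and then dispatch the exceptional case by direct computation in a rank-$2$ subsystem. For $\gamma_m(U) \leq U_m$ I would argue by induction: the base $\gamma_1(U) = U = U_1$ is immediate, and the inductive step $\gamma_{m+1}(U) = [\gamma_m(U), U] \leq [U_m, U]$ combines with the Chevalley commutator formula (\ref{commutation}), since each generator $[X_r, X_s]$ lies in $\prod_{i,j\geq 1,\, ir+js\in\Phi^+} X_{ir+js} \leq U_{h(r)+h(s)} \leq U_{m+1}$.

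For the reverse containment I would show, again inductively, that $X_r \leq \gamma_{h(r)}(U) \cdot U_{h(r)+1}$ for every $r \in \Phi^+$. Since $U$ is nilpotent the height filtration terminates, so iterating collapses this to $X_r \leq \gamma_{h(r)}(U)$ and hence $U_m \leq \gamma_m(U)$. To establish the step, fix $r$ with $h(r) = m \geq 2$. Standard root-system theory (using that some simple root $s$ has positive inner product with $r$) produces a decomposition $r = s + t$ with $s \in \Pi$ and $t \in \Phi^+$ of height $m-1$; by the inductive hypothesis $X_t \leq \gamma_{m-1}(U)$. Applying (\ref{commutation}) to $[x_t(a), x_s(b)]$ gives
\[ [x_t(a), x_s(b)] = x_r(-C_{11ts}\, ab) \cdot \prod_{\substack{i+j \geq 3 \\ i,j \geq 1}} x_{it+js}\!\left(c_{ij}(-a)^i b^j\right), \]
with each correction factor in $U_{m+1}$ because $i+j\geq 3$ with $i,j \geq 1$ forces $h(it+js) \geq m+1$. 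The left-hand side lies in $[\gamma_{m-1}(U), U] \leq \gamma_m(U)$, so $x_r(-C_{11ts} ab) \in \gamma_m(U) \cdot U_{m+1}$; provided $C_{11ts}\not\equiv 0 \pmod p$ we obtain $X_r \leq \gamma_m(U)\cdot U_{m+1}$ by varying $a,b$.

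The classical structure of the root system is what makes the coefficient nonzero. By Carter's description of the extraspecial constants, $C_{11ts}\in\{\pm 1\}$ for simply-laced $\Phi$ (types $A$ and $D$), which is nonzero in every $\Z_p$; for types $B$ and $C$ the coefficients lie in $\{\pm 1, \pm 2\}$, both nonzero in $\Z_p$ when $p\geq 3$. In all these ``otherwise'' cases the induction closes and $\gamma_m(U) = U_m$ for every $m$.

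The main obstacle will be the exceptional case $p = 2$ for types $B$ and $C$, where the $\pm 2$ coefficients vanish and the inductive argument breaks at height $2$. To exhibit the failure I would inspect a rank-$2$ subsystem: in $B_2$, for instance, every commutator between height-$1$ generators that reaches height $2$ has the form $[x_{\alpha_1}(a), x_{\alpha_2}(b)] = x_{\alpha_1+\alpha_2}(ab)\cdot x_{\alpha_1 + 2\alpha_2}(ab^2)$, whose image in $U_2/U_3 \cong \Z_2\times\Z_2$ is the cyclic ``diagonal'' generated by $(1,1)$ rather than all of $\Z_2\times\Z_2$. Every other commutator carrying the ``missing'' element has a vanishing $\pm 2$ structure constant, so $\gamma_2(U)$ is strictly smaller than $U_2$. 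This same local obstruction embeds into every $B_d$ and $C_d$ via a rank-$2$ subsystem, yielding $\gamma_2(U)\neq U_2$ in general.
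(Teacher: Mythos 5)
The paper offers no proof of this lemma at all --- it is quoted directly from Spitznagel \cite[Theorem 1]{Spitznagel68} --- so your argument can only be judged on its own merits. Your overall strategy (both containments by induction on height, the reverse containment hinging on the leading Chevalley constant $C_{11ts}\in\{\pm1,\pm2\}$ being a unit in $\Z_p$, and the exceptional case exhibited in a rank-two subsystem) is the standard and correct route, and the ``otherwise'' half does go through once one bookkeeping point is fixed: your induction establishes $X_t\leq\gamma_{m-1}(U)\,U_m$, not $X_t\leq\gamma_{m-1}(U)$ as you invoke, so in the inductive step you should write $x_t(a)=gu$ with $g\in\gamma_{m-1}(U)$, $u\in U_m$, and use $[gu,x_s(b)]=[g,x_s(b)]^{u}\,[u,x_s(b)]\in\gamma_m(U)\,U_{m+1}$ (or run the induction downward from the top of the height filtration so that the hypothesis really is $X_t\leq\gamma_{m-1}(U)$).

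The genuine flaw is in the exceptional case, the one part you verify by explicit computation. Since $h(\alpha_1+2\alpha_2)=3$, the factor $x_{\alpha_1+2\alpha_2}(ab^2)$ is invisible in $U_2/U_3\iso\Z_2$ --- that quotient is one-dimensional, not $\Z_2\times\Z_2$ --- and in fact the image of $\gamma_2(U)$ in $U_2/U_3$ is \emph{all} of $U_2/U_3$, so the observation as written proves nothing. What is true, and what you need, is the statement one level down: for $p=2$ the subgroup $U_2\leq B_2$ is central of order $4$, every commutator of generators equals $1$ or $x_{\alpha_1+\alpha_2}(1)x_{\alpha_1+2\alpha_2}(1)$, and hence $\gamma_2(U)$ is the diagonal copy of $\Z_2$ inside $U_2\iso\Z_2\times\Z_2$, strictly smaller than $U_2$. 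Finally, ``the obstruction embeds into every $B_d$ and $C_d$'' needs one more sentence of justification: in the full group you must check that no other commutator isolates $x_{\alpha_{d-1}+\alpha_d}$ from its height-three partner $x_{\alpha_{d-1}+2\alpha_d}$, which holds because every other instance of the commutator formula landing on that partner root carries a structure constant divisible by $2$. With those repairs the proof is complete.
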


Because of Lemma \ref{Gibbs Lemma}, we assume $p\geq 3$ for types $B$ and $C$. Some comments about when $p=2$ are given at the end of the section, see Remark \ref{p=2}. From Lemma \ref{Gibbs Lemma}, it follows that $L^{(1)}_i=U_i/U_{i+1}$ is the quotient containing all root subgroups of height $i$. Since there are exactly $d$ fundamental roots, $L_1^{(1)}\iso \Z_p^d$. Furthermore, since there are exactly $d-1$ positive roots with height 2, $L_2^{(1)}\iso\Z_p^{d-1}$. Therefore, $\circ$ may be regarded as an alternating $\Z_p$-bilinear map, $\circ: \Z_p^d\times\Z_p^d\rightarrow\Z_p^{d-1}$.

We construct the structure constants $M_{\mathfrak{g}}$ (Gram matrix) of $\circ$ for the classical types $\mathfrak{g}$, i.e. for all $u,v\in \Z_p^d$, $u\circ v =uM_{\mathfrak{g}}v^t$. Observe that for type $A$,
\begin{equation}\label{structure constants}
[x_{p_i}(s),x_{p_j}(t)]=\left\{ \begin{array}{ll} x_{p_i+p_j}(st) & \text{if } (p_i,p_j)\text{ is extra special},\\ x_{p_i+p_j}(-st) & \text{if }(p_j,p_i)\text{ is extra special},\\ 0 & \text{otherwise}.\end{array}\right.
\end{equation}
Define $\varphi_1: L_1\rightarrow \Z_p^d$ by $\varphi_1(\gamma_2x_{p_i}(t))=te_i$, and define $\varphi_2:L_2\rightarrow \Z_p^{d-1}$ by $\varphi_2(\gamma_3x_{p_i+p_j}(t))=te_{j-1}$, provided $i<j$. Note that both $\varphi_1$ and $\varphi_2$ are vector space isomorphisms. 
Thus, by (\ref{structure constants}), the structure constants matrix for $A_d(p)$ is 
\begin{equation}\label{matrix M}
M_A=\begin{bmatrix}
0 & e_1 &  & &\\[5pt]
-e_1 & 0 & e_2& &\\
 & -e_2 & 0 &\ddots &\\
 & &\ddots&\ddots &e_{d-1}\\[5pt]
 & & &-e_{d-1} &0
\end{bmatrix}.\end{equation}

\begin{lem}\label{initial results}
Let $M_\mathfrak{g}$ be the structure constants for $\circ$ of $\mathfrak{g}(p)$. Then $M_\mathfrak{g}$ has the same shape as the Cartan matrix of the root system of type $\mathfrak{g}$.
\end{lem}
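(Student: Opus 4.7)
The plan is to compute the entries of $M_\mathfrak{g}$ directly from the Chevalley commutator formula (\ref{commutation}) applied to pairs of fundamental root elements, and then read off the nonzero pattern and compare it with the Cartan matrix. The reason this is tractable is that all higher-order terms in the commutator formula automatically vanish modulo $\gamma_3$: if $p_i,p_j\in\Pi$ and $a,b\geq 1$ with $a+b\geq 3$, then $ap_i+bp_j$ (if it is a root) has height $\geq 3$, so $x_{ap_i+bp_j}(\cdot)\in\gamma_3$. Hence modulo $\gamma_3$ we have
\[
[x_{p_i}(s),x_{p_j}(t)] \;\equiv\; x_{p_i+p_j}\bigl(-C_{11 p_j p_i}\, s t\bigr) \pmod{\gamma_3}
\]
when $p_i+p_j\in\Phi$, and it lies in $\gamma_3$ otherwise. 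Under the identification $\varphi_2$ this gives the $(i,j)$-entry of $M_\mathfrak{g}$ as a scalar multiple of the basis vector indexed by the height-$2$ root $p_i+p_j$, or $0$ if no such root exists.

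Next I would identify exactly when $p_i+p_j$ is a root. For distinct fundamental roots, $p_j-p_i\notin\Phi$, so the $p_i$-string through $p_j$ begins at $p_j$, and its length is $-\langle p_j,p_i^\vee\rangle+1$. Thus $p_i+p_j\in\Phi$ iff the Cartan integer $\langle p_j,p_i^\vee\rangle$ is nonzero, which by definition is exactly when $p_i$ and $p_j$ are joined by an edge of the Dynkin diagram. For such a connected pair, one of $(p_i,p_j)$ or $(p_j,p_i)$ is an extra-special pair, and by the structure constant formula recalled just before equation~(\ref{decomp}), the constant $C_{11 p_j p_i}$ equals $\pm(v+1)$ with $v=0$; hence it is $\pm 1$, and in particular nonzero in $\Z_p$ for $p\geq 3$.

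Combining these two steps, $M_\mathfrak{g}[i,j]\neq 0$ iff $p_i$ and $p_j$ are adjacent in the Dynkin diagram of $\mathfrak{g}$. Since $M_\mathfrak{g}$ is alternating it vanishes on the diagonal, so its off-diagonal nonzero pattern coincides exactly with that of the Cartan matrix of type $\mathfrak{g}$, which is what ``same shape'' means (as illustrated by the explicit matrix $M_A$ in~(\ref{matrix M}), whose off-diagonal support matches that of the Cartan matrix of $A_d$). I do not anticipate a serious obstacle: once the Chevalley formula is truncated modulo $\gamma_3$, the argument is a bookkeeping statement about which simple roots sum to a root. The only subtle point is checking that none of the relevant Chevalley structure constants vanish in $\Z_p$, which is handled uniformly by the extra-special pair argument above.
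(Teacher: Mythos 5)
Your proof is correct and takes the same route as the paper, whose entire proof is the single sentence ``This follows from the Chevalley commutator formula (\ref{commutation})''; you have simply supplied the details (truncation modulo $\gamma_3$, the root-string criterion for $p_i+p_j\in\Phi$, and the nonvanishing of $C_{11p_jp_i}=\pm1$ via the extra-special pair formula with $v=0$). No gaps.
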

\begin{proof} 
This follows from the Chevalley commutator formula (\ref{commutation}).
\end{proof}

Hence, the structure constant matrices $M_B$ and $M_C$ are equal to $M_A$, provided they are the same rank. Finally, for type $D$, we have 
\begin{equation}\label{TypeD}
 M_D = \begin{bmatrix}
0 & e_1 &  & & &\\
-e_1 & 0 & \ddots& & &\\
 &\ddots &\ddots&e_{d-3} & &\\
& &-e_{d-3} &0 &e_{d-2} &e_{d-1}\\
& & & -e_{d-2}& 0& 0\\
& & & -e_{d-1}& 0& 0
\end{bmatrix}.
\end{equation}

Now we compute the adjoint rings of these bilinear maps. By Lemma \ref{initial results}, the adjoint rings for $M_A$, $M_B$, and $M_C$ are the same, but the adjoint ring for $M_D$ is different. For each $i\in\{1,...,d-1\}$, let $M_i\in\text{M}_{d}(\Z_p)$ to be the matrix with $1$ in the $(i,i+1)$ entry, $-1$ in the $(i+1,i)$ entry, and 0 elsewhere. Similarly, let $N\in \text{M}_d(\Z_p)$ be the matrix with $1$ in the $(d-2,d)$ entry and $-1$ in the $(d,d-2)$ entry, and $0$ elsewhere. Therefore, $M_A=\sum_{i=1}^{d-1}e_iM_i$ and $M_D=\sum_{i=1}^{d-2}e_iM_i + e_{d-1}N$. It follows that
\begin{align*}
\Adj{M_1}&=\left\{\left(\begin{bmatrix}w & x & *\\ y& z & * \\ 0&0&*\end{bmatrix},\begin{bmatrix}z & -x & *\\ -y& w & * \\ 0&0&*\end{bmatrix}\right): w,x,y,z\in\Z_p\right\} \text{ and}\\
\Adj{N} & =\left\{\left(\begin{bmatrix}* & 0 & * & 0\\ * & w & * & x \\ * & 0 &* &0 \\ * & y & * & z \end{bmatrix},\begin{bmatrix}* & 0 & * & 0\\ * & z & * & -x \\ * & 0 &* &0 \\ * & -y & * & w \end{bmatrix}\right): w,x,y,z\in\Z_p\right\}.
\end{align*}

Note that we can obtain $M_i$ from $M_1$ by applying a permutation. Applying such a permutation also permutes the adjoint ring, and therefore
\begin{equation*}
\Adj{M_i}=\left\{\left(\begin{bmatrix}* & 0 & 0 & *\\ *& w & x & * \\ * &y & z &*\\ * & 0 & 0 & * \end{bmatrix},\begin{bmatrix}* & 0 & 0 & *\\ *& z & -x & * \\ * &-y & w &*\\ * & 0 & 0 & * \end{bmatrix}\right): w,x,y,z\in\Z_p\right\}.
\end{equation*}
Observe that $\Adj{M_A}=\Adj{\sum_{i=1}^{d-1}e_iM_i}=\bigcap_{i=1}^{d-1}\Adj{M_i}$. For $x,y,z\in\Z_p$, let $D(x,y)\in\text{M}_d(\Z_p)$ denote the diagonal matrix with diagonal $(x,y,x,y,...)$, $Q_A(x,y)=xE_{12}+yE_{(d-1)d}$, and $Q_D(x,y,z)=xE_{12}+yE_{(d-2)(d-1)}+zE_{(d-2)d}$. Thus,
\begin{align*} 
\Adj{M_A} &= \{ ( D(w,x) + Q_A(y,z), D(x,w) - Q_A(y,z)) :w,x,y,z\in\Z_p\} \\
\Adj{M_D} &= \{ ( D(v,w) + Q_D(x,y,z), D(w,v) - Q_D(x,y,z) ) : v,w,x,y,z\in\Z_p\}.
\end{align*}

Now we describe the Jacobson radical of the adjoint rings of $M_A$ and $M_D$; denote the radicals $J_A$ and $J_D$ respectively. If $d\geq 3$ (or $d\geq 4$ for type $D$ as $D_3$ and $A_3$ are the same root systems), then
\begin{align} \label{Jacobson types ABC}
J_A &=\left\{ (Q_A(y,z),-Q_A(y,z)) :y,z\in\Z_p \right\}\\ \label{Jacobson type D}
J_D &=\left\{ (Q_D(x,y,z),-Q_D(x,y,z)) : x,y,z\in\Z_p\right\}.
\end{align}
Otherwise (if $d\leq 2$), the adjoint ring of the forms
\[ [0] \quad\text{and}\quad \begin{bmatrix} 0 & e_m \\ -e_m & 0 \end{bmatrix}\]
is $\Z_p$ and $\text{M}_2(\Z_p)$ (resp.), which both have trivial Jacobson radical. Note that $J_A^2=J_D^2=0$. Thus we get at most one new subgroup at the top of the series corresponding to $L_1J_A$ (or $L_1J_D$). In fact, the new subgroup is
\begin{equation}\label{H_1} 
H_1= \left\{\begin{array}{ll} \langle X_{p_1},X_{p_d},\gamma_2\rangle & \text{if type } A,B, \text{ or } C, \\ \langle X_{p_1},X_{p_{d-1}}, X_{p_d},\gamma_2\rangle & \text{if type } D,\end{array}\right.
\end{equation}
and $H_i=\gamma_2$ for all $i\geq 2$ c.f. (\ref{alphas}).

For each iteration of the computation, we use the bilinear map, given by commutation, from the first nonzero factor of the $\alpha^{(k)}$-series. This bilinear map depends on $U/H_1$ and $\gamma_2/\partial\gamma_2$. We remark that the quotient $\gamma_2/\partial\gamma_2$ in the second iteration contains all root subgroups $X_r$ with $h(r)=2$ where both $r-p_1,r-p_d\not\in\Phi^+$ (additionally $r-p_{d-1}\not\in\Phi^+$ if type $D$). Note that if $\widetilde{U}$ is the unipotent subgroup of $A_{d-2}(p)$ ($A_{d-3}(p)$ if type $D$), then commutation in $U/H_1$ is the same as commutation in $\widetilde{U}/\gamma_2(U)$ (up to relabeling). Thus, the structure constants for $*:U/H_1\times U/H_1\rightarrow \gamma_2(U)/\partial\gamma_2(U)$, is given by (\ref{matrix M}) for all types, except the structure constants have smaller dimension:
\begin{equation*} M_* = \begin{bmatrix}
0 & e_2 &  & \\
-e_2 & 0 & \ddots& \\
 &\ddots &\ddots&e_{d-2} \\
& &-e_{d-2} &0 
\end{bmatrix} \quad\text{or}\quad 
\begin{bmatrix}
0 & e_2 &  & \\
-e_2 & 0 & \ddots& \\
 &\ddots &\ddots&e_{d-3} \\
& &-e_{d-3} &0 
\end{bmatrix}.
\end{equation*}

If $U$ is the unipotent subgroup of the classical group $\mathfrak{g}(p)$, then after the first iteration the structure constants matrix of commutation is given by (\ref{matrix M}), except with smaller dimension. Thus, for the subsequent iterations, we cut the dimension by two and the structure constants matrix is similar to (\ref{matrix M}). This proves the following proposition since each iteration adds a dimension to the monoid which indexes the associated Lie algebra.

\begin{proposition}\label{grading}
The associated Lie algebra of the adjoint series is $\N^m$-graded, with $m=\left\lceil\frac{d}{2}\right\rceil$ (types $A$, $B$, or $C$) or $m=\left\lfloor\frac{d}{2}\right\rfloor$ (type $D$).
\end{proposition}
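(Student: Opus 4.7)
The plan is to induct on the Lie rank $d$, using the explicit form of the Jacobson radicals $J_A$ and $J_D$ computed above together with the observation that each iteration of the adjoint refinement appends exactly one coordinate to the grading monoid of $L(\alpha)$.

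First I would dispatch the small-rank base cases. For types $A$, $B$, $C$ with $d \leq 2$, the commutator map $\circ$ on $L_1^{(1)} \times L_1^{(1)}$ has structure constants $[0]$ or the standard $2 \times 2$ alternating form, whose adjoint rings $\Z_p$ and $\text{M}_2(\Z_p)$ have trivial Jacobson radical; hence no subgroup is added by the refinement, the process stabilizes immediately, and $L(\alpha^{(1)})$ is $\N$-graded, matching $m = \lceil d/2 \rceil = 1$. The exceptional isomorphism $D_3 \iso A_3$ covers type $D$ with $d = 3$, leaving $d \geq 4$ for type $D$ in the induction.

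For the inductive step, the radical formulas (\ref{Jacobson types ABC}) and (\ref{Jacobson type D}) together with $J_A^2 = J_D^2 = 0$ force exactly one new subgroup $H_1$ as in (\ref{H_1}) at the top of the series during the first refinement, producing an $\N^2$-graded $\alpha^{(2)}$. The key transfer step is the computation already carried out immediately before the proposition: the commutation bilinear map on the first nonzero factor of $\alpha^{(2)}$ has structure constants matrix $M_*$, which coincides with the matrix (\ref{matrix M}) for the unipotent subgroup of $A_{d-2}(p)$ (when $\mathfrak{g} \in \{A, B, C\}$) or $A_{d-3}(p)$ (when $\mathfrak{g} = D$). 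Thus every subsequent adjoint refinement proceeds exactly as for a type-$A$ unipotent subgroup of the appropriate smaller rank, and by the inductive hypothesis yields an $\N^{\lceil(d-2)/2\rceil}$-grading (respectively, an $\N^{\lceil(d-3)/2\rceil}$-grading). Adding the coordinate contributed by the first refinement gives $m = 1 + \lceil(d-2)/2\rceil = \lceil d/2 \rceil$ for types $A$, $B$, $C$, and $m = 1 + \lceil(d-3)/2\rceil = \lfloor d/2 \rfloor$ for type $D$.

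The hard part will be pinning down rigorously that the $(k+1)$-st iteration really does see only a type-$A$ commutator matrix of rank $d - 2k$ (or $d - 2k - 1$ when the initial type was $D$), i.e., that each new refinement interacts only with the newly exposed first nonzero graded factor and that the adjoint ring on that factor inherits the same shape as a smaller type-$A$ adjoint ring. Since every added subgroup comes from the top of the current series and the Chevalley commutator formula (\ref{commutation}) controls commutation of root subgroups of fixed height independently of later refinements, this reduction is legitimate; spelling it out is essentially what the three paragraphs preceding the proposition accomplish.
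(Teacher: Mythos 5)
Your proposal is correct and follows essentially the same route as the paper: the paper's argument is precisely the observation (stated in the paragraphs preceding the proposition) that the first refinement contributes one grading coordinate and reduces the commutation structure constants to the type-$A$ matrix of rank $d-2$ (or $d-3$ for type $D$), after which each iteration repeats this reduction; your explicit induction on $d$ with base cases $d\leq 2$ is just a cleaner packaging of that iteration, and your arithmetic $1+\lceil(d-2)/2\rceil=\lceil d/2\rceil$ and $1+\lceil(d-3)/2\rceil=\lfloor d/2\rfloor$ checks out.
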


Because the bilinear maps essentially stay the same, we can easily list the top of the adjoint series. Let $H_1$ be defined as in (\ref{H_1}) and $H_k=\langle X_{p_k},X_{p_{d-k+1}},H_{k-1}\rangle$ for $k\geq 2$ (if type $D$, then $H_k=\langle X_{p_k},X_{p_{d-k}}, H_{k-1}\rangle$). With $m$ given by Proposition \ref{grading},
\begin{equation}\label{series} 
U \geq H_{m-1} \geq \cdots \geq H_1 \geq \gamma_2 \geq \cdots \geq \gamma_c \geq 1,
\end{equation}
without filter generation. Observe that if $U$ is of type $A$, $B$, or $C$, then
\[ \left\vert U/H_{m-1} \right\vert = \left\{\begin{array}{ll} p & \text{if } 2\nmid d,\\ p^2 & \text{if } 2\mid d,\end{array}\right. \]
and $\left\vert H_{k+1}/H_k\right\vert = \left\vert H_1/\gamma_2\right\vert =p^2$, for $1\leq k \leq m-2$. On the other hand, if $U$ is of type $D$, then 
\[ \left\vert U/H_{m-1} \right\vert = \left\{\begin{array}{ll} p^2 & \text{if } 2\nmid d,\\ p & \text{if } 2\mid d,\end{array}\right. \]
$\left\vert H_{k+1}/H_k\right\vert = p^2$, for $1\leq k \leq m-2$, and $\left\vert H_1/\gamma_2\right\vert = p^3$. Therefore, Corollary \ref{cor:Aut(U)-series} follows.

Now we investigate the lower terms of the adjoint series of these unipotent groups. Currently, the series in (\ref{series}) has $\Theta(d)$ terms and it may not be a filter. To get $\Theta(d^2)$ terms, we must generate a filter with the series in (\ref{series}). Let $\mathcal{S}_m = \{ n\in\N^m : n\prec 2e_1\}$. We recursively define a function $\pi_m:\mathcal{S}_m\rightarrow 2^G$. Let $\pi_2:\mathcal{S}_2\rightarrow 2^G$ where 
\[ \pi_2(n,i) =\left\{ \begin{array}{ll} \gamma_1 & \text{if } (n,i)\preceq (1,0), \\ H_1 & \text{if } (n,i)=(1,1), \\ \gamma_2 & \text{otherwise.}\end{array}\right.\]
Thus, for $k\geq 3$, let $\pi_k:\mathcal{S}_k\rightarrow 2^G$ where 
\[ \pi_k(n,i) =\left\{ \begin{array}{ll} \pi_{k-1}(n) & \text{if } (n,i)\preceq (e_1,0), \\ H_{k-1} & \text{if } (n,i)=(e_1,1), \\ H_{k-2} & \text{otherwise.}\end{array}\right.\]
To be consistent with the established notation, let $\pi_m = \pi$.

Because of the lexicographic ordering, it is possible to have an infinite number of indices correspond to the same image under $\alpha$. That is, for $n\in\N^m$, the cardinality of $\{ n'\in\N^m : \alpha_{n'}=\alpha_n\}$ need not be finite. It is even possible for the previous set to include elements with different $n_1$ values. Thus when referring to a term in the $\alpha$-series, we use the smallest (lex) index $(n_1,...,n_m)$, where $n_1$ is as large as possible. In the case of the last term in the series, we let $n_1$ equal one plus the class of $U$. 

To get the lower terms, we must generate them via (\ref{pi bar}), so for all $n\in\N^m$, 
\[ \alpha_n=\alpha_n^{(m)}=\bar{\pi}_n=\prod_{{\bf t}\in\mathcal{G}_0^n}[\pi_{\bf t}]. \]
Note that the terms of the commutator subgroups are equal to either $U$, $H_{i}$, or $\gamma_2$. 
The following lemma states that we don't need to run through all ${\bf t}\in\mathcal{G}_0^n$.

\begin{lem}\label{min paths}
Let $k\geq 2$ and $n=(n_1,...,n_k)$ where $n_i\in\N$. Then
\[ \alpha^{(k)}_n = \prod_{\bf t} [\pi_{\bf t}] \gamma_{n_1+1},\]
where the product runs through all paths, ${\bf t}$, of length $n_1$ from $\mathcal{G}^n_0$.
\end{lem}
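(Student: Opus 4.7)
The plan is to split the defining product $\alpha_n^{(k)} = \bar{\pi}_n = \prod_{\mathbf{t} \in \mathcal{G}_0^n} [\pi_\mathbf{t}]$ by the length (number of edges) of the path $\mathbf{t}$, and to argue that every path of length strictly greater than $n_1$ contributes only to $\gamma_{n_1+1}$, while paths of length exactly $n_1$ give the stated explicit generators. Two short observations drive the argument. First, since every generator lies in $\mathcal{S}_m \subseteq \{n \in \N^m : n_1 \leq 1\}$, each edge of the Cayley graph contributes either $0$ or $1$ to the first coordinate, so summing to $n_1$ forces any path from $0$ to $n$ to have length at least $n_1$. Second, a straightforward induction on $k$ using the recursive definition of $\pi_k$ shows $\pi_s \leq U$ for every $s$ in the domain, because $\gamma_2$ and each $H_i$ are contained in $U = \gamma_1$; consequently, a commutator $[\pi_{s_1}, \ldots, \pi_{s_\ell}]$ along a path of length $\ell$ lies in $\gamma_\ell$.

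Combining the two observations, any path $\mathbf{t}$ of length $\ell > n_1$ satisfies $[\pi_\mathbf{t}] \leq \gamma_\ell \leq \gamma_{n_1+1}$, which yields the containment
\[ \alpha_n^{(k)} \leq \prod_{\mathbf{t}} [\pi_\mathbf{t}] \cdot \gamma_{n_1+1}, \]
with the remaining product running only over paths of length exactly $n_1$; the reverse inclusion of that product is automatic since its factors already appear in the full defining product for $\alpha_n^{(k)}$.

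It remains to show $\gamma_{n_1+1} \leq \alpha_n^{(k)}$, which I would obtain by directly evaluating $\alpha^{(k)}_{(n_1+1)e_1}$. Any path from $0$ to $(n_1+1)e_1$ must consist of exactly $n_1+1$ copies of the generator $e_1$, forced because the remaining coordinates of the target are $0$ and every edge coordinate is nonnegative, possibly interspersed with copies of the zero generator; since the recursion yields $\pi_0 = \pi_{e_1} = U$, every such commutator equals $\gamma_\ell$ for some $\ell \geq n_1+1$. Hence $\alpha^{(k)}_{(n_1+1)e_1} = \gamma_{n_1+1}$, and applying the filter property to the lex comparison $n \prec (n_1+1)e_1$ gives $\gamma_{n_1+1} \leq \alpha_n^{(k)}$.

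No serious obstacle is anticipated: the lemma reduces to a length/weight comparison in the Cayley graph together with the elementary fact that iterated commutators of subgroups of $U$ fall through the lower central series at the expected rate. The only steps requiring care are the inductive verification that $\pi_s \leq U$ for every generator and a correct reading of the generating set $\mathcal{S}_m$, both of which are routine from the definitions.
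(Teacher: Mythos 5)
Your proposal is correct and follows essentially the same route as the paper: decompose the defining product by path length, note that every edge adds at most $1$ to the first coordinate so paths have length at least $n_1$, absorb all longer paths into $\gamma_{n_1+1}$ using $\pi_s\leq U$, and verify $\gamma_{n_1+1}\leq\alpha^{(k)}_n$. The only (cosmetic) difference is that the paper obtains this last containment by exhibiting the explicit length-$(n_1+1)$ path $[\pi_{e_1},\ldots,\pi_{e_1},\pi_{(0,n_2,\ldots,n_k)}]=\gamma_{n_1+1}$ in $\mathcal{G}_0^n$ itself, whereas you compute $\alpha^{(k)}_{(n_1+1)e_1}=\gamma_{n_1+1}$ and invoke the order-reversing property for $n\prec(n_1+1)e_1$; both are valid.
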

\begin{proof} 
Recall that $\mathcal{S}_k=\{n\in\N^k : n\prec 2e_1\}$. Therefore, 
\[ \alpha^{(k)}_n \geq [\pi_{e_1},...,\pi_{e_1},\pi_{(0,n_2,...,n_k)}] =\gamma_{n_1+1}=\gamma_{n_1+1},\]
so every path of length at least $n_1+1$ from $\mathcal{G}^n_0$ is already contained in $\alpha^{(k)}_n$. Since $(n_1,0,...,0)\preceq n$, it follows that $\gamma_{n_1}\geq \alpha_n^{(k)}$, so the statement follows.
\end{proof}

From Lemma \ref{min paths}, it follows that, for a fixed $n\in\N^m$, the entries and their multiplicities (the number of occurrences) of the commutator $[\pi_{\bf t}]$ are completely determined. Indeed, for $n=(n_1,...,n_m)$ and for all commutators $n_1$ entries, the entry $H_i$ must have multiplicity $n_{i+1}$. Because $\N^k$ is a commutative monoid, we get every possible order of terms in the commutator. 

In the following theorem, we examine the multiset of entries from $[\pi_{\bf t}]$. These multisets are partially ordered under component inclusion. That is, for multisets $\mathcal{A}$ and $\mathcal{B}$, where $\abs{\mathcal{A}}=\abs{\mathcal{B}}=e$, if there exist sequences $\{A_i\}^e_{i= 1}$ and $\{B_i\}^e_{i= 1}$ with $A_i\leq B_i$ for each $i$ and $\mathcal{A}=\bigcup_{i=1}^eA_i$ and $\mathcal{B}=\bigcup_{i=1}^eB_i$, then $\mathcal{A}\leq \mathcal{B}$. 

Let $U\leq A_d(p)$ be a maximal unipotent subgroup. For fixed $r\in\Phi^+$, let $\alpha_n$ be the smallest subgroup of the adjoint series of $U$ such that $X_r\leq \alpha_n$. Let $\mathcal{M}_r$ be the collection of the multisets $\mathcal{A}$ of entries from $[\pi_{\bf t}]$, where $X_r\leq[\pi_{\bf t}]$ and $\abs{\mathcal{A}}=n_1$. Then $\mathcal{M}_r$ has a minimal element. Indeed, if $r=p_i+\cdots + p_j$, using symmetry, we may assume $i\leq m$, then the minimal element is 

\begin{equation*}
\mathcal{B}=\left\{ \begin{array}{ll} \{H_i,...,H_{m-1},U,U,H_{m-1},...,H_{d-j+1}\} & \text{if $d$ is even and } i\leq m < j,\\ \{H_i,...,H_{m-1},U,H_{m-1},...,H_{d-j+1}\} & \text{if $d$ is odd and } i\leq m < j,\\ \{H_i,...,H_{j}\} & \text{if } i<j<m. \end{array}\right.
\end{equation*}

If $\alpha_n$ is the smallest term of the adjoint series containing $X_r$ and $\mathcal{B}$ is the smallest multiset of $\mathcal{M}_r$, then 
\[ \alpha_n = \prod_{\{b_1,...,b_{n_1}\}=\mathcal{B}} [b_1,...,b_{n_1}].\]
This observation is critical to the proof of Theorem \ref{main result for A}.

\subsection{Proof of Theorem \ref{main result for A}}
\begin{proof}[Proof of Theorem \ref{main result for A}]
Let $r\in\Phi^+$ and write $r=p_i+\cdots +p_j$. Using the notation above, the smallest multiset $\mathcal{B}$ of $\mathcal{M}_r$ is given by the equation above. Observe that if $r'=p_{d-j+1}+\cdots +p_{d-i+1}$ then $\mathcal{B}$ is the smallest multiset in $\mathcal{M}_{r'}$. Therefore, if $\alpha_n$ is the smallest term of the adjoint series containing $X_r$, then $\alpha_n$ is the smallest term in the adjoint series containing $X_{r'}$. 

We show that if $s\in\Phi^+$, where $s\ne r$ and $s\ne r'$, and if $\alpha_{n'}$ is the smallest term in the adjoint series containing $X_s$, then $\alpha_n\ne \alpha_{n'}$. If $\mathcal{B}_r$ and $\mathcal{B}_s$ are the smallest multisets contained in $\mathcal{M}_r$ and $\mathcal{M}_s$ respectively, then by the above equation for $\mathcal{B}$, we have that $\mathcal{B}_r\ne\mathcal{B}_s$.
 Therefore, $\alpha_n\ne \alpha_{n'}$. Hence, the orders of the factors of the adjoint series of $U$ are either $p$ (if $r=r'$) or $p^2$.
\end{proof}

\begin{cor}
The adjoint series of $U\leq A_d(p)$ has $\Theta(d^2)$ factors. 
\end{cor}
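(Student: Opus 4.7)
The plan is essentially immediate from Theorem \ref{main result for A}. First I would recall the size of $U$: the positive roots of type $A_d$ are precisely the sums $p_i + p_{i+1} + \cdots + p_j$ with $1 \leq i \leq j \leq d$, so $|\Phi^+| = d(d+1)/2$ and $|U| = p^{d(d+1)/2}$.

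Next, let $N$ denote the number of nontrivial factors of the adjoint series of $U$. Writing the telescoping product
$$|U| = \prod_{i=1}^{N} \bigl|\alpha_{n_{i-1}}/\alpha_{n_i}\bigr|$$
and combining it with Theorem \ref{main result for A}, which asserts that every nontrivial factor has order $p$ or $p^2$, immediately yields $p^{N} \leq |U| \leq p^{2N}$. Hence
$$\frac{d(d+1)}{4} \;\leq\; N \;\leq\; \frac{d(d+1)}{2},$$
so $N = \Theta(d^2)$.

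If a sharper count is desired, one can read it off from the proof of Theorem \ref{main result for A}: the smallest term $\alpha_n$ of the adjoint series containing $X_r$ also contains $X_{r'}$, where $r' = p_{d-j+1} + \cdots + p_{d-i+1}$ for $r = p_i + \cdots + p_j$, and distinct orbits of the involution $r \mapsto r'$ on $\Phi^+$ produce distinct terms. Counting the orbits of this involution (roughly half of $|\Phi^+|$, plus the fixed points $i + j = d+1$) gives an explicit value of $N$ on the order of $d^2/4$. There is no real obstacle here; the only thing to verify is that the chain of $\alpha_n$'s is strictly decreasing between consecutive nontrivial factors, which is exactly what Theorem \ref{main result for A} provides.
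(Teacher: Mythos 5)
Your argument is exactly the paper's: the paper's proof is the one-line observation that each factor has order $p$ or $p^2$ while $\log_p|U|=\binom{d+1}{2}$, which is precisely your telescoping bound $p^N\leq|U|\leq p^{2N}$. The proposal is correct and matches the paper's approach; the sharper orbit count in your final paragraph is optional extra detail the paper does not pursue.
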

\begin{proof} 
Each factor has order $p$ or $p^2$ and $\log_p|U|=\binom{d+1}{2}$.
\end{proof}

\begin{remark}
There are similar statements like that of Theorem \ref{main result for A} for the other classical types. In fact, for types $B$ and $C$, the factor orders are either $p$ or $p^2$, with an exception in the middle of the series, where one term has order roughly $p^{d/2}$. Furthermore, for type $D$, the factor orders are either $p$, $p^2$, or $p^3$; again with an exception in the middle of the series where one term has order roughly $p^{d/2}$. 
\end{remark}

\subsection{Proof of Theorem \ref{main result}}

\begin{proof}[Proof of Theorem \ref{main result}] The proof of Theorem \ref{main result for A} applies to all roots $r$ with $h(r)\leq d$ and whose summands are unique. Every root system $\Phi$ with $\abs{\Pi}=d$, contains roots of the form $r=p_i+\cdots +p_{i+h-1}$ where $1\leq i$ and $i+h-1\leq d$. Thus, the length of the adjoint series for $\mathfrak{g}(p)$ is at least as long as the length of the adjoint series for $A_d(p)$.
\end{proof}

\begin{remark}\label{p=2}
When $p=2$, some modest changes can be made to the previous lemmas and theorems, aside from the change from alternating to symmetric bilinear maps. While the structure constants of the bilinear map, $\circ : U/U'\times U/U'\rightarrow U'/\gamma_3(U)$, are different in characteristic two for types $B$ and $C$ \cite[p. 848]{Gazdanova06}, the Jacobson radical of $\Adj{\circ}$ is similar to that of type $D$. Thus, we obtain factors of order $p^3$ in this case. 
\end{remark}

\begin{remark}\label{generalize to q}
To generalize these results to field extensions of $\Z_p$, we use centroids to find the appropriate field extension without requiring it as input. As introduced by Wilson in \cite{Wilson12}, the \emph{centroid} of a bilinear map $\circ : V \times V \rightarrow W$ is defined to be 
\[ \text{Cent}(\circ) = \{ (f,h)\in \Endo{V}\times\Endo{W} : \forall u,v\in V, uf\circ v = u\circ fv = (u\circ v)h \}.\]
If $U\leq \mathfrak{g}(K)$, then structure constants of $M_\mathfrak{g}$ are similar to that of Lemma \ref{initial results}. Instead of $1 \times 1$ blocks along the upper and lower diagonal, we get $e\times e$ blocks where the field has size $p^e$, and a calculation shows that Cent$(M_\mathfrak{g})\iso K$. In this case, the sizes of nearly every factor of the adjoint series is $|K|$-bounded.
\end{remark}

The statement of the main theorem is not explicit about the length of the adjoint series. In particular, it is unknown if the adjoint series is significantly longer than the lower central series for small rank. However, we see in Figure \ref{data} that the length of the lower central series is much smaller than the adjoint series, even for small ranks. Included in that figure are the lengths of the adjoint series for the exceptional types ($F$ and $E$) for $p\geq 3$. The adjoint ring of the bilinear map for $G_2(p)$ has a trivial Jacobson radical for all $p\geq 5$, and hence, has no nontrivial adjoint refinement. These computations were run in {\sc Magma} \cite{Magma}. 

\begin{figure}
\centering
\includegraphics[width=0.9\textwidth]{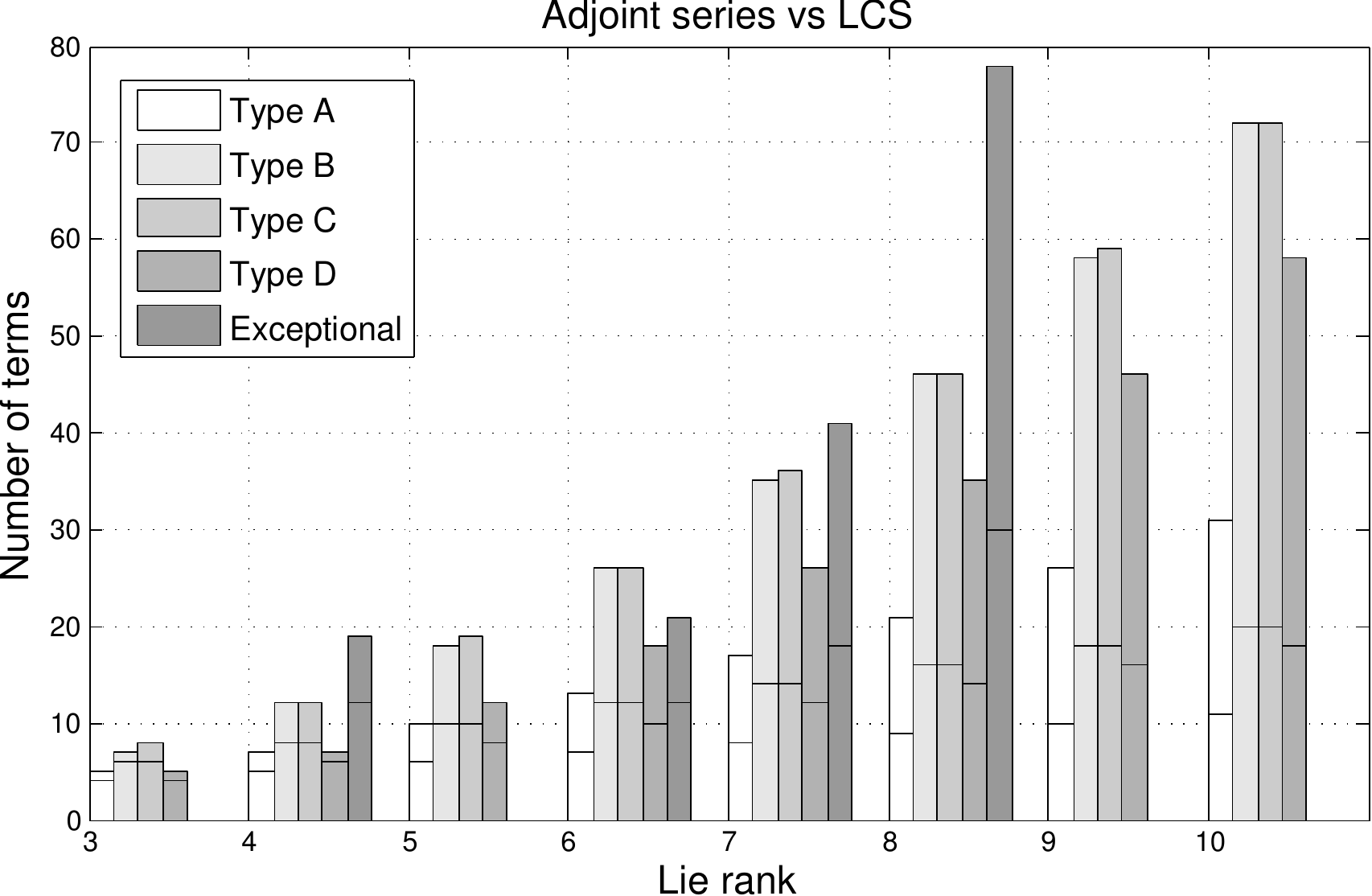}
\caption{A comparison between the lower central series (lower bar) and the adjoint series (upper bar) for small Lie rank.}\label{data}
\end{figure}

\section*{Acknowledgments}
The author is indebted to J. B. Wilson for suggesting and advising this project, A. Hulpke for helpful discussions and coding advice, and the referee for insightful comments which improved the clarity of the paper.

\bibliographystyle{plain}
\bibliography{LongerSeries}

\end{document}